\definecolor{citegreen}{rgb}{0,0.6,0}
\definecolor{refred}{rgb}{0.8,0,0}
\title{Gaussian upper bounds for the heat kernel on\\ evolving manifolds}
\author{Reto Buzano and Louis Yudowitz}
\date{}
\providecommand{\abs}[1]{\lvert #1\rvert}
\providecommand{\norm}[1]{\lVert #1\rVert}
\DeclareMathOperator{\Vol}{Vol}
\newcommand{\Rm}{\mathrm{Rm}}
\newcommand{\Rc}{\mathrm{Rc}}
\newcommand{\Sc}{\mathrm{Sc}}
\newcommand{\RR}{\mathbb{R}}
\newcommand{\eps}{\varepsilon}
\newcommand{\Lap}{\triangle}
\newcommand{\dt}{\tfrac{\partial}{\partial t}}
\newtheoremstyle{break}%
  {12pt}%
  {16pt}%
  {\itshape}%
  {}%
  {\bfseries}%
  {}%
  {\newline}%
  {\thmname{#1}\thmnumber{ #2}\thmnote{ \normalfont{(#3)}}}%
\theoremstyle{break}%
\newtheorem{lemma}{Lemma}[section]%
\newtheorem{mainthm}[lemma]{Main Theorem}%
\newtheorem{thm}[lemma]{Theorem}%
\newtheorem{cor}[lemma]{Corollary}%
\newtheorem{prop}[lemma]{Proposition}%
\newtheorem*{rem}{Remark.}%
\numberwithin{equation}{section}%
\newcommand\printaddress{{
\setlength{\parindent}{15pt}
\footnotesize~
\par
{\scshape Reto Buzano}
\newline 
Queen Mary University of London, 
School of Mathematical Sciences, 
Mile End Road, 
London E1 4NS, UK 
\newline
\textit{E-mail address:} 
\texttt{r.buzano@qmul.ac.uk}
\newline 
Universit\`a degli Studi di Torino, 
Dipartimento di Matematica,
Via Carlo Alberto 10, 
10123 Torino, Italy 
\newline
\textit{E-mail address:} 
\texttt{reto.buzano@unito.it}
\par
{\scshape Louis Yudowitz}
\newline 
Queen Mary University of London, 
School of Mathematical Sciences, 
Mile End Road, 
London E1 4NS, UK
\newline
\textit{E-mail address:} 
\texttt{l.yudowitz@qmul.ac.uk}
\par
}}
\begin{document}
\maketitle

\begin{abstract}
In this article, we prove a general and rather flexible upper bound for the heat kernel of a weighted heat operator on a closed manifold evolving by an intrinsic geometric flow. The proof is based on logarithmic Sobolev inequalities and ultracontractivity estimates for the weighted operator along the flow, a method which was previously used by Davies \cite{D87} in the case of a non-evolving manifold. This result directly implies Gaussian-type upper bounds for the heat kernel under certain bounds on the evolving distance function; in particular we find new proofs of Gaussian heat kernel bounds on manifolds evolving by Ricci flow with bounded curvature or positive Ricci curvature. We also obtain similar heat kernel bounds for a class of other geometric flows.
\end{abstract}

\section{Introduction and Main Results}\label{sec.intro}
This article is concerned with heat kernel estimates on evolving manifolds, but we start with a brief discussion of such bounds for static manifolds. To this end, let $(M^n,g)$ be a complete Riemannian manifold of dimension $n \geq 3$ and consider the \emph{heat kernel} or \emph{fundamental solution} $H(x,t;y,s)$, i.e. the minimal solution of
\begin{equation}\label{eq.kernel}
\begin{split}
(\dt-\Lap_x)H(x,t;y,s) &=0,\\
\lim_{t\searrow s} H(\cdot,t;y,s) &=\delta_y,
\end{split}
\end{equation}
for $x,y\in M$ and $t>s$. Here, $\Lap_x$ denotes the Laplace-Beltrami operator with respect to the metric $g$ in the $x$-variable and the limit to the Dirac-$\delta$ based at $y$ has to be understood in the sense of measures. It is well known that on Euclidean $\RR^n$, the heat kernel is given by the explicit formula
\begin{equation*}
H(x,t;y,s)=\frac{1}{[4\pi(t-s)]^{n/2}}\,e^{-\frac{\abs{x-y}^2}{4(t-s)}}.
\end{equation*}
On Riemannian manifolds, bounds of similar type were first obtained by Cheng-Li-Yau \cite{CLY81} in the case of complete manifolds with bounded sectional curvature and further improved by Li-Yau \cite{LY86} using their famous differential Harnack inequalities. Under a certain curvature assumption, they proved that 
\begin{equation}\label{eq.offdiag}
H(x,t;y,s)\leq\frac{C}{f(t-s)}\,e^{-\frac{d^2(x,y)}{D(t-s)}},
\end{equation}
where $C$ and $D$ are sufficiently large constants and $f(\cdot)$ is an increasing function. (In fact, they showed that $D$ can be chosen arbitrarily close to the optimal value $4$.) A bound of the form \eqref{eq.offdiag} is usually referred to as a \emph{Gaussian} upper bound or \emph{off-diagonal bound} and it directly implies the (logically weaker) \emph{on-diagonal bound}
\begin{equation}\label{eq.ondiag}
H(x,t;y,s)\leq\frac{C}{f(t-s)}.
\end{equation}
Surprisingly, in many situations the bounds in \eqref{eq.offdiag} and \eqref{eq.ondiag} turn out to be equivalent! A beautiful, abstract theory exploring this fact was developed by Davies in a series of papers \cite{DS84,D87,D89,DP89} where he provided a method to obtain Gaussian upper bounds from on-diagonal bounds on quite general manifolds, using logarithmic Sobolev inequalities introduced by Gross \cite{G75}. Compared to previous work, his method has the advantage that it does not directly depend on any curvature assumptions for the underlying Riemannian manifold. Let us mention that around the same time similar methods using different functional inequalities were developed. To summarise, these prove in particular that an on-diagonal bound \eqref{eq.ondiag} with $f(t)=t^{n/2}$, where $n=\dim M$, is equivalent to any of the following functional inequalities, each of them also implying an off-diagonal upper bound \eqref{eq.offdiag}: 
\begin{itemize}
\item a logarithmic Sobolev inequality (see Davies \cite{D87}),
\item a proper Sobolev inequality (see Varopoulos \cite{V85}),
\item a Nash type inequality (see Carlen-Kusuoka-Stroock \cite{CKS87}),
\item or a Faber-Krahn type inequality (see Carron \cite{C94} and Grigor'yan \cite{G94}).
\end{itemize}

Finally, Grigor'yan \cite{G97} developed a direct method to deduce off-diagonal upper bounds from on-diagonal ones without using a bridging functional inequality and allowing a large class of functions $f(t)$. In particular, his result extends work of Ushakov \cite{U80} who first proved that \eqref{eq.ondiag} implies \eqref{eq.offdiag} on Euclidean space and for polynomial $f(t)$.\\

Let us now discuss the case where the underlying Riemannian manifold is not fixed (and thus the Laplace operator used in the definition of the heat kernel in \eqref{eq.kernel} is time-dependent). In 2002, Guenther \cite{G02} proved existence of a fundamental solution on a compact manifold with a smoothly time-dependent metric $g(t)$. Since then, and in particular motivated by the work of Perelman \cite{P02} who developed important Harnack inequalities and monotone quantities for solutions of the (adjoint) heat equation on a manifold evolving by the Ricci flow, many authors have proved Gaussian-type upper bounds for the heat kernel on such evolving manifolds.\\

In the case where $(M,g(t))$ evolves by Hamilton's Ricci flow $\dt g=-2\Rc$ and has uniformly bounded curvature in space-time, the direct method of Grigor'yan \cite{G97} can be adopted with some modifications (see Chau-Tam-Yu \cite{CTY11} or Theorem 26.25 in the Ricci flow book \cite{RF3}). The result can be stated as follows.

\begin{thm}[cf.~Chau-Tam-Yu \cite{CTY11}, Chow et al. \cite{RF3}]\label{thm.cty}
Let $(M^n,g(t))$ be a solution to the Ricci flow with $n\geq 3$ and with uniformly bounded curvature on $[0,T]$, $T<\infty$. Then there exists a constant $C$ depending on $n$, $T$, and $\sup_{M\times[0,T]}\abs{\Rm}$ such that the heat kernel satisfies
\begin{equation*}
H(x,t;y,s)\leq\frac{C}{(t-s)^{n/2}} \,e^{-\frac{d_{g(t)}^2(x,y)}{C(t-s)}},
\end{equation*}
for any $x,y\in M$ and $0\leq s<t\leq T$. 
\end{thm}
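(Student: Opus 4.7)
The plan is to adapt Grigor'yan's direct method for passing from on-diagonal to off-diagonal heat kernel bounds to the evolving-metric setting. Two ingredients are needed: an on-diagonal bound $H(x,t;y,s) \leq C(t-s)^{-n/2}$, and a weighted $L^2$-energy monotonicity that converts it into the Gaussian bound via the semigroup identity.

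For the on-diagonal bound, the uniform curvature assumption $\sup_{M\times[0,T]}|\Rm| < \infty$ gives uniform control on volumes and on the injectivity radius of $(M,g(t))$ for all $t \in [0,T]$, and hence a uniform Sobolev (or Nash) inequality with constants depending only on $n$, $T$, and $\sup|\Rm|$. Running Moser iteration (or Nash's $L^1 \to L^\infty$ argument) on the heat equation with $g$ frozen at a given time $t$ then produces the on-diagonal estimate with the required constants.

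For the upgrade, fix $y$ and $s$, set $u(x,t) = H(x,t;y,s)$, and for a parameter $\tau > t$ consider
\begin{equation*}
F(t) := \int_M u^2(x,t)\, e^{\xi(x,t)}\, dV_{g(t)}(x), \qquad \xi(x,t) = -\frac{\psi^2(x,t)}{4(1+\eps)(\tau - t)},
\end{equation*}
where $\psi(\cdot,t)$ is a smooth regularisation of the time-dependent distance $d_{g(t)}(\cdot, y)$. Differentiating $F$, using the heat equation for $u$ together with $\partial_t\, dV_{g(t)} = -\Sc\, dV_{g(t)}$, and integrating by parts yields
\begin{equation*}
F'(t) \leq \int_M u^2 e^{\xi} \Bigl( \partial_t \xi + |\nabla \xi|^2_{g(t)} + C_0 \Bigr)\, dV_{g(t)},
\end{equation*}
where $C_0 \leq n \sup|\Rm|$ bounds the scalar-curvature contribution. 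Grigor'yan's choice of $\xi$ is designed precisely so that $\partial_t \xi + |\nabla\xi|^2_{g(t)} \leq 0$ whenever $|\nabla \psi|_{g(t)} \leq 1$, which holds up to the regularisation because $d_{g(t)}(\cdot,y)$ is $1$-Lipschitz with respect to $g(t)$. Monotonicity of $F$ (up to the harmless factor $e^{C_0(t-s)}$), combined with the on-diagonal bound applied at an intermediate time $r$ through the semigroup identity
\begin{equation*}
H(x,t;y,s) = \int_M H(x,t;z,r)\, H(z,r;y,s)\, dV_{g(r)}(z),
\end{equation*}
then produces the claimed Gaussian off-diagonal bound after optimising in $\tau$ and letting $\eps \to 0$.

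The main obstacle is the term $\partial_t d_{g(t)}(x,y)$ hidden inside $\partial_t \xi$, which is not directly controlled by the Lipschitz property of the distance. Here the uniform curvature bound is essential: Hamilton's distance distortion estimate gives, in a suitable barrier sense, $|\partial_t d_{g(t)}(x,y)| \leq C \sup|\Rc| \cdot d_{g(t)}(x,y)$, so this additional term can be absorbed into the Gaussian weight at the cost of enlarging $C$ in the statement. Producing the smooth approximation $\psi$ of $d_{g(t)}$ and justifying differentiation through its singular locus in a way compatible with the flow is the routine but delicate bookkeeping step; once handled, the rest of the argument follows the static case of Grigor'yan.
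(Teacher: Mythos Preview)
Your sketch is essentially the original Chau--Tam--Yu/Grigor'yan approach and is basically sound, but it is \emph{not} the route the paper takes. The paper deliberately avoids Grigor'yan's direct method; its proof of Theorem~\ref{thm.cty} goes through the Main Theorem~\ref{main.theorem} and Corollary~\ref{cor.main}\,(i) via Davies' weighted-operator technique. Concretely, the paper proves an $L^p$ log-Sobolev inequality along the Ricci flow for the weighted Laplacian $L_t=\phi_t^{-1}\Lap_{g(t)}\phi_t$ with $\phi_t=e^{\alpha\psi_t}$, uses it to obtain $L^2\to L^\infty$ and $L^1\to L^2$ contraction estimates for the semigroup of $\dt-L_t$, and combines these into the bound $K(x,t;y,s)\leq C(t-s)^{-n/2}e^{2\alpha^2(t-s)}$. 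Choosing $\psi=\mu^{-1}d_{g(t_0)}(\cdot,y_0)$ (time-independent) and optimising in $\alpha$ gives the Gaussian factor in \eqref{main.eq1}; the bounded-curvature hypothesis enters only at the very last step, to bound $\mu=\sup_{\lambda}\sup_M \abs{\nabla d_{g(t_0)}}_{g(\lambda)}$.

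The trade-off is as follows. Your Grigor'yan-type argument uses the curvature bound throughout: once for the on-diagonal estimate (via uniform Sobolev constants) and again for the distance-distortion term $\partial_t d_{g(t)}$ hidden in $\partial_t\xi$. The paper's Davies approach is curvature-free up to Corollary~\ref{cor.main}: the on-diagonal bound and the full weighted-kernel estimate \eqref{eq.mainlemma} use only the Ye--Zhang log-Sobolev inequality along Ricci flow, and curvature appears solely in the final step controlling $\mu$ (or $\eta$). This modularity is what lets the same machinery yield Theorem~\ref{thm.cz} under nonnegative Ricci curvature and Theorem~\ref{thm.mainS} for other geometric flows. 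Two minor points in your sketch: the coefficient in front of $\abs{\nabla\xi}^2$ after integration by parts is $\tfrac{1}{2}$ rather than $1$ (harmless, absorbed by your $(1+\eps)$), and ``Moser iteration with $g$ frozen'' should really be parabolic Moser iteration for the evolving equation---but under bounded curvature this is standard.
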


Let us remark that in this case where the curvature is uniformly bounded along the Ricci flow all the metrics $g(t)$ are uniformly equivalent and we could therefore use the distance function with respect to a fixed metric, e.g. $g(0)$, by possibly changing the constant $C$.\\

However, bounds on the (adjoint) heat kernel on a Ricci flow seem particularly interesting near points where the curvature tends to infinity, since they can then be used to understand the singular behaviour of the flow (for example by using Perelman's $\mathcal{W}$-entropy). An important step in this direction was made by Cao-Zhang \cite{CZ10}. They proved an on-diagonal bound without curvature assumptions using a uniform logarithmic Sobolev inequality along the Ricci flow, as found for example in the works of Ye \cite{Y07} and Zhang \cite{Z07,Z10} (see also B\u{a}ile\c{s}teanu \cite{B12} for a similar on-diagonal bound). In the same paper, Cao-Zhang also obtained off-diagonal bounds (using again Grigor'yan's direct method) under the assumption of \emph{positive Ricci curvature}. Their result is the following.

\begin{thm}[cf.~Cao-Zhang \cite{CZ10}]\label{thm.cz}
Let $(M^n,g(t))$ be a solution to the Ricci flow on $[0,T)$, $T<\infty$ in dimension $n\geq 3$. Assume $g(t)$ has nonnegative Ricci curvature for all times and that it is not Ricci-flat. Then there exists a constant $C$ depending on $n$, $T$, and $g(0)$, as well as a numerical constant $\eta$, such that the fundamental solution of the heat equation satisfies
\begin{equation*}
H(x,t;y,s)\leq\frac{C}{(t-s)^{n/2}} \,e^{-\eta\Lambda(t)} \,e^{-\frac{d_{g(t)}^2(x,y)}{C(t-s)}},
\end{equation*}
for any $x,y\in M$ and $0\leq s<t< T$. Here $\Lambda(t):=\int_0^t \min_M R(\cdot,\lambda)d\lambda$.
\end{thm}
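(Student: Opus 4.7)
The plan is to follow the two-step strategy of Cao-Zhang \cite{CZ10}: first establish an on-diagonal kernel bound of the desired shape, and then bootstrap to the full Gaussian upper bound via Grigor'yan's direct method \cite{G97}, adapted to the time-dependent setting.

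For the on-diagonal step, I would invoke the uniform logarithmic Sobolev inequality along Ricci flow of Ye \cite{Y07} and Zhang \cite{Z07,Z10}, whose constants depend only on $n$, $T$, and $g(0)$. Combining this with the ultracontractivity machinery of Davies \cite{D87}, suitably extended to handle the time-dependent Laplacian, yields an $L^1\to L^\infty$ smoothing estimate and hence a pointwise on-diagonal bound. The new feature relative to the static setting is the evolving volume form $\partial_t\,dV_{g(t)}=-R\,dV_{g(t)}$, which enters every energy estimate; under the hypothesis $\Rc\geq 0$, Hamilton's identity $\partial_t R=\Lap R+2\abs{\Rc}^2$ ensures that $\min_M R(\cdot,t)$ is nondecreasing, so $\Lambda(t)$ is well-defined and tracking this term carefully through the log-Sobolev computation produces the exponential factor $e^{-\eta\Lambda(t)}$.

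For the off-diagonal step, I would adapt Grigor'yan's integrated maximum principle to evolving metrics. Fixing $(y,s)$, one studies a weighted $L^2$ quantity of the form
\begin{equation*}
I(t):=\int_M H^2(x,t;y,s)\,e^{2f(x,t)}\,dV_{g(t)}(x),
\end{equation*}
where $f(x,t)$ is a suitable sub-solution of a Hamilton-Jacobi type inequality built from the time-dependent distance $d_{g(t)}(x,y)$, in the spirit of $f(x,t)=\tfrac{d_{g(t)}^2(x,y)}{D(t-s+\tau)}$ for parameters $D,\tau>0$. Differentiating in $t$, using $(\partial_t-\Lap_x)H=0$, $\partial_t g=-2\Rc$, and integration by parts gives an expression in which, after absorbing the cross terms via Cauchy-Schwarz, the dynamics of $I$ is controlled by the combination $\partial_t f+\abs{\nabla f}^2$ together with the scalar curvature term coming from $\partial_t\,dV$. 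The assumption $\Rc\geq 0$ is essential here: by Hamilton's formula it forces $d_{g(t)}(x,y)$ to be non-increasing in $t$ (in the barrier sense), and it guarantees $R\geq 0$, both of which contribute with the right sign to make $I(t)$ effectively non-increasing for $D$ sufficiently large. Splitting the kernel via a Chapman-Kolmogorov identity at an intermediate time and applying Cauchy-Schwarz then combines the resulting $I$-estimate with the on-diagonal bound to yield the claimed Gaussian decay.

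The main obstacle is making the differentiation of $I(t)$ rigorous given the non-smoothness of $d_{g(t)}(\cdot,y)$ at the cut locus and the simultaneous time-evolution of the metric. This is standardly handled by smoothing or by a barrier argument invoking the Laplacian comparison theorem, which under $\Rc\geq 0$ acts in our favour. Balancing the contributions from $\partial_t\,dV$, $\partial_t g$, and the barrier inequality for $\partial_t d_{g(t)}$ is what pins down both the admissible range of $D$ in the exponent and the precise numerical constant $\eta$.
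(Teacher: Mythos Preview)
Your proposal is correct and is essentially a faithful sketch of the original Cao--Zhang argument: an on-diagonal bound via the Ye--Zhang log-Sobolev inequality and Davies-type ultracontractivity, followed by Grigor'yan's integrated maximum principle adapted to the evolving metric, with $\Rc\geq 0$ supplying both the distance monotonicity and the sign control on the $\partial_t dV$ term.

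However, this is \emph{not} the route the paper takes. The paper explicitly avoids Grigor'yan's direct method. Instead, it runs the Davies machinery for the \emph{weighted} operator $L_t=\phi_t^{-1}\Lap_{g(t)}\phi_t$ with $\phi_t=e^{\alpha\psi_t}$, proving $L^p$-log-Sobolev inequalities for $L_t$ (Corollaries~\ref{cor.logsob1} and~\ref{cor.logsob2}) and then ultracontractivity estimates $L^2\to L^\infty$ and $L^1\to L^2$ for the weighted semigroup (Lemmas~\ref{lemma.ultra} and~\ref{lemma.L12}). This yields a bound on the weighted kernel $K$ (Theorem~\ref{main.theorem}), from which the Gaussian bound for $H$ follows by the relation $H=\phi_t K\phi_t^{-1}$ and a suitable choice of $\psi_t$ built from $d_{g(t)}(\cdot,y_0)$ (Corollary~\ref{cor.main}(ii)). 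The hypothesis $\Rc\geq 0$ enters only at the very end, forcing the distance-distortion parameter $\eta$ in Corollary~\ref{cor.main}(ii) to vanish.

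What each approach buys: your (Cao--Zhang) route recovers the refined prefactor $e^{-\eta\Lambda(t)}$, which the paper's variant does not capture. The paper's route, on the other hand, gives a single unified computation that delivers the Gaussian exponent directly without the separate $I(t)$ analysis, and it transparently generalises to the class of flows with $\mathcal{D}(\Sc,X)\geq 0$ treated in Section~\ref{sec.OtherFlows}.
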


This result allowed the authors to classify blow-down limits of so-called Type~I $\kappa$-solutions of the Ricci flow. A similar result for Ricci flows with Ricci curvature bounded below has been obtained by Zhu in \cite{zhu}, relying on double integral estimates. Other Gaussian bounds have been obtained for example for Type~I Ricci flows by Mantegazza and the first author \cite{MM15} or, with a much more elaborate proof, for Ricci flows with bounded scalar curvature by Bamler and Zhang \cite{BZta}. The latter two results rely on a different type of logarithmic Sobolev inequality found by Hein and Naber \cite{HN14}, a Gaussian lower bound for the heat kernel and the parabolic mean value inequality. Specific applications of these bounds include showing blow-up limits of Type~I singularities of the Ricci flow are non-trivial gradient shrinking Ricci solitons and proving weak convergence results for the Ricci flow when the scalar curvature is uniformly bounded. Finally, in a recent preprint \cite{wu}, Wu obtained a sharp Gaussian bound for the (Schr\"odinger) heat kernel on shrinking Ricci solitons.\\

The goal of the present article is to develop a general approach to proving Gaussian-type heat kernel bounds that work in a variety of different situations and only rely on the behaviour of the distance function rather than \emph{explicitly} on curvature assumptions. In contrast to the proofs of the theorems above, we neither use Grigor'yan's direct method (as in the original proofs of Theorems \ref{thm.cty} and \ref{thm.cz}) nor the Hein-Naber Sobolev inequality or mean value inequality (as in \cite{MM15} and \cite{BZta}). Instead, we use the ideas of Davies \cite{D87} of proving the Gaussian upper bounds using a bridging functional inequality and showing ultracontractivity estimates for a \emph{weighted} operator. We will see that this method can be used to find new proofs of (variants of) Theorem \ref{thm.cty} and \ref{thm.cz}.\\

The main effort of this article goes into proving the following key theorem.

\begin{mainthm}[Upper bounds for the kernel of a weighted heat operator]
\label{main.theorem}
Let $(M^n,g(t))$ be a compact solution to the Ricci flow on $[0,T)$, $T<\infty$ in dimension $n\geq 3$. Then there exists a constant $C$ depending only on $n$, $T$ and $g(0)$ such that the following holds. Let $\psi:M\times[0,T)\to\RR$ be a smooth function with $\psi_t(\cdot)=\psi(\cdot,t)$ satisfying $\abs{\nabla\psi_t}_{g(t)}\leq 1$ and let $K(x,t;y,s)$ be the fundamental solution of the weighted heat operator $\dt-L_t$, where $L_t=\phi_t^{-1}\Lap_{g(t)}\phi_t$ for $\phi_t=e^{\alpha\psi_t}$ with $\alpha\in\RR$. Then, we have the upper bound
\begin{equation}\label{eq.mainlemma}
K(x,t;y,s)\leq \frac{C}{(t-s)^{n/2}}\,e^{2\alpha^2(t-s)},
\end{equation}
for all $x,y\in M$ and $0\leq s<t<T$.
\end{mainthm}

Clearly, setting $\alpha=0$ and $\psi_t \equiv 1$, we obtain the on-diagonal bound
\begin{equation}
H(x,t;y,s)\leq\frac{C}{(t-s)^{n/2}}
\end{equation}
for the fundamental solution of the heat equation on a manifold evolving by Ricci flow without any curvature assumption. For suitable choices of $\alpha$ and $\psi_t$, we can also obtain Gaussian-type upper bounds. Two slightly different such bounds are given in the following corollary.

\begin{cor}[Gaussian upper bounds for the heat kernel along the Ricci flow]
\label{cor.main}
Let $(M^n,g(t))$ be a compact solution to the Ricci flow on $[0,T)$, $T<\infty$ in dimension $n\geq 3$. Then there exists a constant $C$ depending only on $n$, $T$, and the initial manifold $(M,g(0))$, such that the fundamental solution of the heat equation satisfies the following estimates.
\begin{itemize}
\item[i)] For any $x,y\in M$ and $0\leq s<t<T$
\begin{equation}\label{main.eq1}
H(x,t;y,s)\leq\frac{C}{(t-s)^{n/2}}\,e^{-\frac{d_{g(t)}^2(x,y)}{8\mu^2(t-s)}},
\end{equation}
where
\begin{equation*}
\mu:=\sup_{\lambda\in[s,t]}\,\sup_{M\setminus L}\,\abs{\nabla d_{g(t)}(y,\cdot)}_{g(\lambda)},
\end{equation*}
and $L$ is the set where $d_{g(t)}(y,\cdot)$ is not differentiable.
\item[ii)] Furthermore, for any $x,y\in M$ and $0\leq s<t<T$
\begin{equation}\label{main.eq2}
H(x,t;y,s)\leq\frac{C}{(t-s)^{n/2}}\,e^{-\frac{d_{g(t)}^2(x,y)}{8(t-s)}\,+\,\eta d_{g(t)}(x,y)},
\end{equation}
where
\begin{equation*}
\eta:=\tfrac{1}{4}\sup_{\lambda\in[s,t]}\sup_{\phantom{[}z\phantom{]}}\;\max\big\{\tfrac{\partial}{\partial\sigma} d_{g(\sigma)}(z,y)\big|_{\sigma=\lambda}, 0\big\},
\end{equation*}
where the second supremum is taken over all $z$ with $d_{g(\lambda)}(z,y)\leq d_{g(\lambda)}(x,y)$.
\end{itemize}
\end{cor}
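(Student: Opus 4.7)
The plan is to apply the Main Theorem to two carefully chosen families of weights $\psi$ and to convert the resulting bounds on $K$ into Gaussian bounds on $H$ via conjugation with $\phi_t=e^{\alpha\psi_t}$ and optimisation in $\alpha$.

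The first step is to work out the link between $K$ and $H$. A direct calculation shows that if $u$ solves the heat equation $(\dt-\Lap_{g(t)})u=0$ and one sets $v:=\phi_t^{-1}u$, then
\begin{equation*}
(\dt - L_t)\,v \;=\; -\alpha\,(\dt\psi_t)\,v.
\end{equation*}
When $\psi_t$ is time-independent as a function on $M$, the right-hand side vanishes and uniqueness of the fundamental solution of $\dt - L_t$ gives the exact conjugation identity $H(x,t;y,s) = e^{\alpha(\psi_t(x)-\psi_s(y))}K(x,t;y,s)$. In general, $(\dt\psi_t)$ has to be controlled pointwise and the discrepancy absorbed by a parabolic comparison with the fundamental solution of $\dt - L_t$.

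For part~(i), fix $t>s$ and $y\in M$ and set $\psi_\lambda(\cdot):=-\mu^{-1}d_{g(t)}(y,\cdot)$, which is independent of $\lambda$ as a function. By the definition of $\mu$ one has $|\nabla\psi_\lambda|_{g(\lambda)}\leq 1$ on $M\setminus L$, and after approximating by a smooth function with the same Lipschitz control the Main Theorem applies. Combined with the exact conjugation identity and taking $\alpha>0$, this yields
\begin{equation*}
H(x,t;y,s)\;\leq\; \frac{C}{(t-s)^{n/2}}\,\exp\Big(-\tfrac{\alpha}{\mu}\,d_{g(t)}(x,y) \,+\, 2\alpha^2(t-s)\Big),
\end{equation*}
and optimising at $\alpha_*=d_{g(t)}(x,y)/(4\mu(t-s))$ reproduces \eqref{main.eq1}. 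For part~(ii), take instead the genuinely time-dependent weight $\psi_\lambda(\cdot):=-d_{g(\lambda)}(y,\cdot)$, for which $|\nabla\psi_\lambda|_{g(\lambda)}=1$ almost everywhere. The extra term in the equation for $v$ is now $\alpha\,\partial_\lambda d_{g(\lambda)}(\cdot,y)\,v$, and on the set $\{z:d_{g(\lambda)}(z,y)\leq d_{g(\lambda)}(x,y)\}$ appearing in the definition of $\eta$ it is bounded above by $4\alpha\eta\,v$. A parabolic maximum principle then gives
\begin{equation*}
H(x,t;y,s)\;\leq\; \frac{C}{(t-s)^{n/2}}\,\exp\Big(-\alpha\,d_{g(t)}(x,y) \,+\, 4\alpha\eta(t-s) \,+\, 2\alpha^2(t-s)\Big),
\end{equation*}
and optimising at $\alpha_*=d_{g(t)}(x,y)/(4(t-s))-\eta$ (or $\alpha_*=0$ when this is non-positive) yields \eqref{main.eq2}.

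The main obstacle is the non-smoothness of the distance function and, in part~(ii), the fact that the supremum defining $\eta$ is taken only over a restricted region, so the comparison argument has to be localised. The first issue is handled by a standard mollification of $d_{g(\lambda)}(y,\cdot)$ preserving the Lipschitz constant and passing to the limit, while for the second one exploits that outside the ball in question the Gaussian factor produced by the conjugation with $\phi$ already dominates any potential contribution from the time-derivative discrepancy, so that only the restricted supremum enters the final bound.
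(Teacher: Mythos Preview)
Your treatment of part~(i) is correct and coincides with the paper's argument up to cosmetic differences: the paper uses $\psi(z)=\mu^{-1}\min\{d_{g(t)}(z,y),d_{g(t)}(x,y)\}$ with $\alpha<0$ rather than your $\psi=-\mu^{-1}d_{g(t)}(y,\cdot)$ with $\alpha>0$, but the exact conjugation identity, the application of the Main Theorem, the optimisation in $\alpha$, and the smoothing step are the same.

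Part~(ii), however, has a genuine gap. The parabolic comparison between $v=\phi_t^{-1}H(\cdot,t;y,s)$ and (a multiple of) $K$ requires the differential inequality $(\partial_t - L_t)v \leq 4\alpha\eta\, v$ to hold \emph{on all of $M$} for each $\lambda\in[s,t]$, not merely on the ball $\{z:d_{g(\lambda)}(z,y)\leq d_{g(\lambda)}(x,y)\}$. Your claim that ``outside the ball the Gaussian factor produced by the conjugation with $\phi$ already dominates any potential contribution from the time-derivative discrepancy'' cannot close this: the Gaussian factor only appears \emph{after} one has compared with $K$ and invoked the Main Theorem, so it is not available to justify the pointwise differential inequality needed \emph{for} the comparison itself. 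With your untruncated choice $\psi_\lambda=-d_{g(\lambda)}(y,\cdot)$, the argument as written only delivers the bound with $\eta$ replaced by the unrestricted supremum of $\tfrac{1}{4}\max\{\partial_\sigma d_{g(\sigma)}(z,y),0\}$ over all $z\in M$.

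The paper's device for obtaining precisely the restricted $\eta$ is to truncate the weight, setting
\begin{equation*}
\psi_\lambda(z):=\min\big\{d_{g(\lambda)}(z,y),\,d_{g(\lambda)}(x,y)\big\}.
\end{equation*}
For $z$ outside the ball one then has $\psi_\lambda(z)=d_{g(\lambda)}(x,y)$, so that $\partial_\sigma\psi_\sigma(z)\big|_{\sigma=\lambda}=\partial_\sigma d_{g(\sigma)}(x,y)\big|_{\sigma=\lambda}$; since $x$ itself satisfies $d_{g(\lambda)}(x,y)\leq d_{g(\lambda)}(x,y)$, this value is already captured by the restricted supremum defining $\eta$. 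Hence $\partial_\lambda\psi_\lambda\leq 4\eta$ holds globally, the comparison principle applies on all of $M$, and one still has $\psi_t(x)-\psi_t(y)=d_{g(t)}(x,y)$ at the evaluation point. This truncation is the missing ingredient in your sketch.
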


We note that the bounds in this corollary depend on the behaviour of the distance function along the flow rather than directly involving curvature bounds. It is however now very easy to give new proofs of (variants of) the Theorems \ref{thm.cty} and \ref{thm.cz}. In fact, in the case where the sectional curvature is uniformly bounded in space-time, we obtain an uniform bound for $\mu$ in \eqref{main.eq1}, and thus a result as in Theorem \ref{thm.cty}. In the case where $\Rc\geq 0$, we have $\dt d_{g(t)}(x,y)\leq 0$ along the Ricci flow and hence $\eta=0$ in \eqref{main.eq2}, that is, we obtain a Gaussian upper bound similar to Theorem \ref{thm.cz}.\\

We would like to point out that Gaussian-type lower bounds have been previously obtained without curvature assumptions by Cao-Zhang \cite{CZ10} based on Harnack inequalities proved by Zhang \cite{Z06} and Cao-Hamilton \cite{CH09}. Moreover, in many situations they also follow from an estimate of Perelman's reduced length functional (see e.g. \cite{MM15} for Type~I flows or \cite{BZta, Z12} for flows with bounded scalar curvature). Therefore, we restrict ourselves to proving upper bounds here.\\

In the second part of the paper, we discuss other geometric flows of the form $\dt g=-2\Sc$, where $\Sc=(S_{ij})$ is a symmetric two-tensor with trace $S = g^{ij}S_{ij}$. We will always assume that for each vector field $X$ on $M$ we have the following tensor inequality
\begin{equation}\label{eq.D}
\begin{aligned}
0\leq \mathcal{D}(\Sc,X) &:= \dt S-\Lap S -2\abs{S_{ij}}^2 +4(\nabla_i S_{ij})X_j -2(\nabla_j S)X_j \\
&\quad + 2R_{ij}X_iX_j - 2S_{ij}X_iX_j.
\end{aligned}
\end{equation}
The main result for such flows is the following variant of Theorem \ref{main.theorem} and Corollay \ref{cor.main}.

\begin{thm}[Gaussian bounds for the heat kernel along geometric flows with $\mathcal{D}(\Sc,X)\geq 0$]
\label{thm.mainS}
Let $n\geq 3$ and let $(M^n,g(t))$ be a compact solution to $\dt g=-2\Sc$ on $[0,T)$, $T<\infty$ satisfying \eqref{eq.D}. Then all the bounds from Theorem \ref{main.theorem} and Corollary \ref{cor.main} still hold. 
\end{thm}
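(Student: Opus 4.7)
The plan is to retrace the proofs of the Main Theorem and Corollary \ref{cor.main} line by line, verifying that the Ricci flow equation $\dt g=-2\Rc$ is invoked only through identities which continue to hold for flows $\dt g=-2\Sc$ satisfying \eqref{eq.D}. The guiding observation is that on a Ricci flow one actually has $\mathcal{D}(\Rc,X)\equiv 0$, by the evolution of the scalar curvature together with the contracted second Bianchi identity; thus weakening the assumption to $\mathcal{D}(\Sc,X)\geq 0$ should, if anything, only improve the relevant monotonicity formulas rather than obstruct them.

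The Main Theorem is proved Davies-style: a uniform logarithmic Sobolev inequality along the flow drives a bootstrap on $L^p\to L^q$ norms of the weighted semigroup generated by $L_t=\phi_t^{-1}\Lap_{g(t)}\phi_t$, yielding ultracontractivity and hence the on-diagonal kernel bound. The only ingredient truly sensitive to the flow equation is the log-Sobolev inequality itself, which for Ricci flow is provided by monotonicity of Perelman's $\mathcal{W}$-entropy. I would replay that monotonicity computation with $\Sc$ in place of $\Rc$ in the functional and verify that $\tfrac{d}{dt}\mathcal{W}$ decomposes schematically as
\begin{equation*}
\int[\text{sum of squares}]\,\tfrac{e^{-f}}{(4\pi\tau)^{n/2}}\,dV \;+\; \int \mathcal{D}(\Sc,\nabla f)\,\tfrac{e^{-f}}{(4\pi\tau)^{n/2}}\,dV,
\end{equation*}
which is nonnegative precisely by \eqref{eq.D}. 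All remaining steps use only $\abs{\nabla\psi_t}_{g(t)}\leq 1$, the elementary volume evolution $\dt\,dV=-S\,dV$, and integration by parts, none of which care whether $\Sc$ equals $\Rc$.

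For Corollary \ref{cor.main}, the argument applies the Main Theorem with $\psi_t$ a suitable regularization of $d_{g(t)}(y,\cdot)$. The gradient bound $\abs{\nabla d_{g(t)}}_{g(t)}\leq 1$ is purely geometric, and the quantity $\mu$ in part (i) only measures gradients of a frozen distance function in evolving metrics, again independent of the flow. In part (ii), $\eta$ involves $\partial_\sigma d_{g(\sigma)}$, which by the first variation of arc length becomes an integral of $\Sc$ along a minimizing geodesic, structurally identical to the Ricci flow expression with $\Rc$ replaced by $\Sc$. Both parts therefore transfer directly from the Ricci flow proof.

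The main obstacle is concentrated in the entropy monotonicity step above. Unlike for Ricci flow, where contracted Bianchi kills the combination $(\nabla_iS_{ij}-\tfrac12\nabla_jS)X_j$, here this cross-term survives and must be absorbed cleanly. The delicate point will be to confirm that the vector field emerging naturally from the various integration-by-parts manipulations is precisely $X=\nabla f$, so that the leftover contribution is literally $\mathcal{D}(\Sc,\nabla f)$ and no auxiliary sign hypotheses beyond \eqref{eq.D} are needed.
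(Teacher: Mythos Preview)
Your approach is essentially the same as the paper's: replace $R_{g(t)}$ by $S_{g(t)}$ throughout, derive the log-Sobolev inequality from $\mathcal{W}$-monotonicity (the paper records this as Proposition \ref{prop.sobS}, with $\tfrac{d}{dt}\mathcal{W}$ picking up exactly the integral of $\mathcal{D}(\Sc,-\nabla f)$ plus a square), and observe that the Corollary \ref{cor.main} arguments are purely about distance functions and carry over unchanged.

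There is one ingredient you overlook. In the ultracontractivity estimates (Lemmas \ref{lemma.ultra} and \ref{lemma.L12}), the step from the second to the third line of \eqref{eq.dt1} uses that $\max_M R_{g(t)}^{-}\leq \max_M R_{g(0)}^{-}$, i.e.\ that the minimum of the scalar curvature is non-decreasing along the flow. This is \emph{not} captured by ``volume evolution and integration by parts''; it is a genuine flow-dependent fact. For the general flow you need the analogue $\min_M S_{g(t)}$ non-decreasing, which the paper obtains by setting $X=0$ in \eqref{eq.D} to get $\dt S \geq \Lap S + 2\abs{S_{ij}}^2$ and applying the maximum principle. Without this, the constants $C_1,C_2$ would depend on the infimum of $S$ over all of $[0,T)$ rather than just on $g(0)$. (A minor sign remark: the vector field that emerges from the entropy computation is $X=-\nabla f$, not $\nabla f$; this does not affect the argument since \eqref{eq.D} is assumed for all $X$.)
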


Geometric flows satisfying the inequality $\mathcal{D}(\Sc,X) \geq 0$, $\forall X\in\Gamma(TM)$, were first studied by the first author in \cite{M10}. Apart from the Ricci flow where $\mathcal{D}(\Rc,X)\equiv 0$, this inequality is for example satisfied by non-evolving manifolds of nonnegative Ricci curvature, List's extended Ricci flow system \cite{L08}, the harmonic Ricci flow \cite{M12}, the twisted K\"ahler-Ricci flow \cite{CS16} on Fano manifolds, or the Lorentzian mean curvature flow \cite{H99} on Lorentzian manifolds of nonnegative sectional curvatures. In particular, Theorem \ref{thm.mainS} gives Gaussian bounds for all of these flows.\\

The article is organised as follows. In Section \ref{sec.Ricci}, we prove Theorem \ref{main.theorem} and Corollary \ref{cor.main} using logarithmic Sobolev inequalities along the Ricci flow and ultracontractivity estimates for a weighted heat operator. In Section \ref{sec.OtherFlows} we explain the proof of Theorem \ref{thm.mainS}.\\

\textbf{Acknowledgements.} We would like to thank Gianmichele Di Matteo and Shengwen Wang for interesting discussions. The first author has been supported by the EPSRC grants EP/M011224/1 and EP/S012907/1. The second author has been supported by a studentship from the QMUL Faculty of Science and Engineering Research Support Fund.

\section{Gaussian bounds along the Ricci flow}\label{sec.Ricci}

In this section, we prove heat kernel bounds along the Ricci flow following the strategy of Davies for non-evolving manifolds \cite{D87}. As a first step towards Theorem \ref{main.theorem}, we prove $L^p$-logarithmic Sobolev inequalities for a weighted Laplacian along the Ricci flow in Subsection \ref{subsec.Sob}. These inequalities are then used in Subsection \ref{subsec.ultra} to obtain ultracontractivity estimates for a weighted heat operator allowing to estimate the $L^\infty$ norm of a solution $u$ at some time $t_1$ by the $L^2$ norm at an earlier time $t_0$, see Lemma \ref{lemma.ultra}. Finally, in Subsection \ref{subsec.L12} we prove a similar contraction estimate from $L^1$ to $L^2$, see Lemma \ref{lemma.L12}. This step follows from the second step by a simple duality argument in the work of Davies, but needs a new argument when the underlying manifold is evolving. In the last subsection, we combine the contraction estimates to give a proof of Theorem \ref{main.theorem} and Corollary \ref{cor.main}.

\subsection{Log-Sobolev Inequalities for Weighted Laplacian}\label{subsec.Sob}
Let us recall the uniform logarithmic Sobolev inequality along the Ricci flow proved by Ye \cite{Y07} and Zhang \cite{Z07,Z10}. See their articles for precise definitions of $A$ and $B$ in the Proposition below.

\begin{prop}[Uniform Log-Sobolev inequality along the Ricci flow, cf. \cite{Y07,Z07,Z10}]\label{prop.logsob}
Let $(M^n,g(t))$ be a compact solution to the Ricci flow $\dt g(t)=-2\Rc_{g(t)}$ in dimension $n\geq 3$ on some positive time interval $[0,T)$, $T<\infty$. For all $\eps>0$ and each $t\in[0,T)$, there holds
\begin{equation*}
\int_M v^2\log v^2\, dV_{g(t)} \leq \eps\int_M\big(\abs{\nabla v}^2+\tfrac{1}{4} R_{g(t)}v^2\big)dV_{g(t)} +\gamma(\eps,t),
\end{equation*}
for all $0\leq v\in C^{\infty}_c(M)$ with $\norm{v}_2=1$. Here, $R_{g(t)}$ denotes the scalar curvature of $(M,g(t))$ and 
\begin{equation*}
\gamma(\eps,t):=-\tfrac{n}{2}\log \eps + A+B\big(t+\tfrac{\eps}{4}\big),
\end{equation*}
where $A$, $B$ are constants depending only on $(M,g(0))$.
\end{prop}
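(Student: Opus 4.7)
The plan is to derive the inequality from Perelman's $\mathcal{W}$-entropy together with its monotonicity under the Ricci flow. Under the standard substitution $v^2=(4\pi\tau)^{-n/2}e^{-f}$ the normalisation $\norm{v}_2=1$ corresponds exactly to Perelman's constraint $\int_M (4\pi\tau)^{-n/2}e^{-f}\,dV=1$, and a direct computation rewrites
\[
\mathcal{W}(g,f,\tau)=4\tau\int_M\abs{\nabla v}^2\,dV+\tau\int_M R_g\,v^2\,dV-\int_M v^2\log v^2\,dV-\tfrac{n}{2}\log(4\pi\tau)-n.
\]
Setting $\mu(g,\tau):=\inf_f\mathcal{W}(g,f,\tau)$ over admissible $f$ and letting $\eps=4\tau$ instantly produces the stated inequality with
\[
\gamma(\eps,g)=-\tfrac{n}{2}\log\eps-\tfrac{n}{2}\log\pi-n-\mu\big(g,\tfrac{\eps}{4}\big),
\]
so the whole problem collapses to controlling $\mu(g(t),\eps/4)$ from below uniformly in $t\in[0,T)$.

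Next I would invoke Perelman's monotonicity formula: along a Ricci flow solution $g(t)$, the map $t\mapsto\mu(g(t),\tau_0-t)$ is non-decreasing for any fixed $\tau_0$. Applying this with $\tau_0=t+\eps/4$ yields $\mu(g(t),\eps/4)\geq\mu(g(0),t+\eps/4)$, so it suffices to establish a lower bound on $\mu(g(0),\sigma)$ for $\sigma$ in a bounded subinterval of $(0,\infty)$. On the fixed compact manifold $(M,g(0))$ one then proves an estimate of the form $-\mu(g(0),\sigma)\leq A+B\sigma$ with $A,B$ depending only on $(M,g(0))$. For small $\sigma$ this is handled by a localisation and blow-up argument near the concentration region of near-minimisers, coupled with the sharp Euclidean log-Sobolev inequality which forces $\mu(g(0),\sigma)\to 0$ as $\sigma\searrow 0$; for $\sigma$ bounded away from zero, compactness of $M$ together with a Poincar\'e-type control on the Dirichlet energy and the pointwise bound on $R_{g(0)}$ provide the linear estimate.

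The technical heart of the argument is this final step, namely obtaining a lower bound on $\mu(g(0),\sigma)$ that is linear in $\sigma$ with constants depending only on the initial geometry. The delicate regime is small $\sigma$, where the minimising $v$ concentrates on ever smaller scales and the transition between the intrinsic manifold geometry and its Euclidean tangent space must be quantified carefully; this is precisely the step that determines the dependence of $A$ and $B$ on $g(0)$. Once that is in hand, the change of variables and Perelman's monotonicity are by now standard, and combining the three ingredients produces the announced $\gamma(\eps,t)=-\tfrac{n}{2}\log\eps+A+B(t+\eps/4)$ with $A,B$ depending only on $(M,g(0))$.
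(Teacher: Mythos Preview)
Your overall architecture matches the paper's: rewrite $\mathcal{W}$ in terms of $v$, reduce the log-Sobolev inequality to a lower bound on the $\mu$-functional, use Perelman's monotonicity to push the estimate back to $t=0$, and then bound $\mu(g(0),\sigma)$ from below. The paper carries this out for the more general class of flows in Proposition~\ref{prop.sobS} (to which Proposition~\ref{prop.logsob} is a special case), using the shifted quantity $\mathcal{W}^\ast=\mathcal{W}+\tfrac{n}{2}\log(4\pi\tau)+n$ and the corresponding $\mu^\ast$.

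The genuine difference lies in how the initial-time bound $-\mu(g(0),\sigma)\leq A+B\sigma$ is obtained. You propose a two-regime argument: a blow-up/localisation analysis invoking the sharp Euclidean log-Sobolev inequality for small $\sigma$, and a compactness-plus-Poincar\'e argument for $\sigma$ bounded away from zero. The paper instead follows Ye and Zhang and works directly from the $L^2$ Sobolev inequality on $(M,g(0))$: applying Jensen's inequality to $\log$ with respect to the measure $v^2\,dV$ bounds $\int v^2\log v^2$ by $\tfrac{n}{2}\log\norm{v}_{2^\ast}^2$, which is then controlled by the Sobolev constant $C_S$ and linearised via the elementary inequality $\log x\leq\beta x-1-\log\beta$. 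A single choice of $\beta$ yields explicit values $A=\tfrac{n}{2}(2\log C_S+\log n-1)$ and $B=4C_S^{-2}\Vol_{g(0)}(M)^{-2/n}-\min_M R_{g(0)}$, valid for all $\sigma>0$ at once. This buys explicit, computable constants and avoids the asymptotic analysis entirely; your route would work but is heavier, and your sketch of the ``$\sigma$ bounded away from zero'' case is vague --- a Poincar\'e inequality alone does not obviously control $\int v^2\log v^2$, so in practice one still ends up using a Sobolev inequality there.
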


We point out that we will prove a more general version of this result in Section \ref{sec.OtherFlows}, see Proposition \ref{prop.sobS}. The $L^2$-norm $\norm{v}_2$ in this proposition and all the $L^p$-norms $\norm{\cdot}_p$ in the following are taken with respect to the (time-dependent) volume element $dV_{g(t)}$. An easy consequence of the Ye-Zhang logarithmic Sobolev inequality is the following $L^p$-logarithmic Sobolev inequality.

\begin{lemma}[$L^p$-logarithmic Sobolev inequality along the Ricci flow]\label{lemma.logsob}
Let $(M^n,g(t))$ be a solution to the Ricci flow in dimension $n\geq 3$ on $[0,T)$, $T<\infty$. For $\eps>0$, $t\in[0,T)$, $1< p<\infty$, and $0\leq u\in C^{\infty}_c(M)$, there holds
\begin{align*}
\int_M u^p\log u\, dV_{g(t)} &\leq -\tfrac{\eps}{2}\int_M u^{p-1}\Lap u\,dV_{g(t)} +\tfrac{p-1}{2p^2}\eps\int_M R_{g(t)}u^p\,dV_{g(t)}\\
&\quad\,+\widetilde{\gamma}(\eps,p,t)\norm{u}_p^p +\norm{u}_p^p\log\norm{u}_p,
\end{align*}
where 
\begin{equation*}
\widetilde{\gamma}(\eps,p,t):=\tfrac{1}{p}\big({-\tfrac{n}{2}}\log\big(\tfrac{2(p-1)}{p}\eps\big) + A+B\big(t+\tfrac{(p-1)}{2p}\eps\big)\big)
\end{equation*}
with $A$, $B$ as in Proposition \ref{prop.logsob} above.
\end{lemma}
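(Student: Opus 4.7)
The plan is to reduce the $L^p$-logarithmic Sobolev inequality to the $L^2$-version of Proposition \ref{prop.logsob} via the standard substitution $v:=u^{p/2}/\norm{u}_p^{p/2}$, which is nonnegative and satisfies $\norm{v}_2=1$. A direct computation gives
\begin{equation*}
\int_M v^2\log v^2\, dV_{g(t)} = \frac{p}{\norm{u}_p^p} \int_M u^p \log u\, dV_{g(t)} \;-\; p\log\norm{u}_p.
\end{equation*}
Since $\nabla v = (p/2)\, u^{(p-2)/2}\nabla u/\norm{u}_p^{p/2}$ and $\nabla(u^{p-1}) = (p-1)u^{p-2}\nabla u$, integration by parts on the compact manifold $M$ also yields
\begin{equation*}
\int_M \abs{\nabla v}^2\, dV_{g(t)} \;=\; -\frac{p^2}{4(p-1)\norm{u}_p^p} \int_M u^{p-1}\Lap u\, dV_{g(t)},
\end{equation*}
and $\int_M R_{g(t)} v^2\, dV_{g(t)} = \norm{u}_p^{-p}\int_M R_{g(t)} u^p\, dV_{g(t)}$.

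The key observation is that the two given parameters, $\eps$ (from the target statement) and the parameter appearing in Proposition \ref{prop.logsob}, are not the same. I would apply Proposition \ref{prop.logsob} to $v$ with the choice
\begin{equation*}
\eps_0 \;:=\; \frac{2(p-1)}{p}\,\eps,
\end{equation*}
which is exactly the rescaling that makes $\eps_0 \cdot p^2/(4(p-1)) = p\eps/2$, thus producing the desired factor $-\eps/2$ in front of $\int u^{p-1}\Lap u\, dV_{g(t)}$ after the subsequent rescaling. Substituting the three identities above into Proposition \ref{prop.logsob}, multiplying through by $\norm{u}_p^p/p$, and moving the $\norm{u}_p^p\log\norm{u}_p$ term to the right-hand side, one reads off the curvature coefficient as $\eps_0/(4p) = (p-1)\eps/(2p^2)$ and the error term as $\gamma(\eps_0,t)/p$. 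Expanding the latter using the explicit formula $\gamma(\eps_0,t) = -\tfrac{n}{2}\log\eps_0 + A + B(t + \eps_0/4)$ and substituting $\eps_0 = 2(p-1)\eps/p$ recovers exactly the stated $\widetilde{\gamma}(\eps,p,t)$.

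The only genuine technical point is justifying the integration by parts when $u$ vanishes and $1<p<2$, since then $u^{p-2}\abs{\nabla u}^2$ need not be \emph{a priori} integrable. This is handled by a routine approximation: carry out the argument first with $u$ replaced by $u+\delta$ for $\delta>0$, where all quantities are smooth and bounded away from zero, and then let $\delta\searrow 0$. Dominated convergence applies on compact $M$, since $\norm{u+\delta}_p\to\norm{u}_p$, $\int_M R_{g(t)}(u+\delta)^p\,dV_{g(t)}\to\int_M R_{g(t)}u^p\,dV_{g(t)}$ for each fixed $t\in[0,T)$, and the $\Lap u$ term is unchanged. Beyond this, the argument is pure algebraic bookkeeping and there is no analytic obstacle beyond Proposition \ref{prop.logsob} itself; the main point to get right is the rescaling $\eps\mapsto\eps_0$, which I expect to be the only step where one can easily slip up on constants.
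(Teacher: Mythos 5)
Your proposal is correct and follows essentially the same route as the paper: substitute $v=u^{p/2}/\norm{u^{p/2}}_2$, apply Proposition \ref{prop.logsob} with the rescaled parameter $\widetilde{\eps}=\tfrac{2(p-1)}{p}\eps$, and read off the coefficients, which all match. The only difference is your extra $\delta$-regularisation to justify the integration by parts near zeros of $u$ for $1<p<2$; the paper omits this (it is in fact automatic, since for a nonnegative smooth function one has $\abs{\nabla u}^2\lesssim u$ locally, so $u^{p-2}\abs{\nabla u}^2$ is bounded), but your added care is harmless and correct.
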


\begin{proof}
Define $v:= \frac{u^{p/2}}{\norm{u^{p/2}}_2}$ (such that $0\leq v\in C^{\infty}_c(M)$ with $\norm{v}_2=1$). Since $\norm{u^{p/2}}_2^2=\norm{u}_p^p$, we find
\begin{equation*}
v^2\log v^2= \tfrac{u^p}{\norm{u}_p^p}\log\Big(\tfrac{u^p}{\norm{u}_p^p}\Big)= \tfrac{p \, u^p}{\norm{u}_p^p}\big(\log u-\log\norm{u}_p\big)
\end{equation*}
and thus
\begin{equation*}
\int_M v^2\log v^2\,dV_{g(t)} =\tfrac{p}{\norm{u}_p^p}\int_M u^p\log u\,dV_{g(t)} -p\log\norm{u}_p
\end{equation*}
Proposition \ref{prop.logsob} applied to $v$ then yields
\begin{align*}
\int_M u^p\log u\,dV_{g(t)} &=\tfrac{\norm{u}_p^p}{p}\Big(\int_M v^2\log v^2\,dV_{g(t)}+p\log\norm{u}_p\Big)\\
& \leq \tfrac{\norm{u}_p^p}{p}\Big(\widetilde{\eps}\int_M\big(\abs{\nabla v}^2+\tfrac{1}{4} R_{g(t)}v^2\big)dV_{g(t)} +\gamma(\widetilde{\eps},t)\Big)+\norm{u}_p^p\log\norm{u}_p\\
&=\tfrac{p\widetilde{\eps}}{4(p-1)}\int_M \nabla u^{p-1}\nabla u\, dV_{g(t)} +\tfrac{\widetilde{\eps}}{4p}\int_M R_{g(t)}u^p\,dV_{g(t)}\\
&\quad\,+\tfrac{\gamma(\widetilde{\eps},t)}{p}\norm{u}_p^p+\norm{u}_p^p\log\norm{u}_p.
\end{align*}
The corollary then follows by setting $\eps:=\frac{p\widetilde{\eps}}{2(p-1)}$.
\end{proof}

Following Davies \cite{D87}, we now introduce the \emph{weighted operator} $L:=\phi^{-1}\Lap_g\phi$ on a manifold $(M,g)$, where $\phi=e^{\alpha\psi}$ with $\alpha\in\RR$ and $\psi:M\to\RR$ satisfying $\abs{\nabla\psi}_g\leq 1$. We have the following estimate.

\begin{lemma}[cf. Davies \cite{D87}]\label{lemma.davies1}
For every complete manifold $(M^n,g)$, $0\leq u\in C^{\infty}_c(M)$, $2\leq p<\infty$, and $L=\phi^{-1}\Lap_g\phi$ as above, we have
\begin{equation*}
2\int_M u^{p-1}L u\,dV_g\leq\int_M u^{p-1}\Lap u\,dV_g+\alpha^2 p\norm{u}_p^p.
\end{equation*}
\end{lemma}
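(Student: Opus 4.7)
The plan is to expand the operator $L$ explicitly, integrate by parts, and then absorb a cross term into the gradient energy using Young's inequality. First I would rewrite $\int_M u^{p-1}Lu\,dV_g = \int_M u^{p-1}\phi^{-1}\Lap(\phi u)\,dV_g$ and integrate by parts to transfer the Laplacian onto $u^{p-1}\phi^{-1}$. Using the Leibniz rule together with the identities $\phi^{-1}\nabla\phi = \alpha\nabla\psi$ and $\phi^{-2}\abs{\nabla\phi}^2 = \alpha^2\abs{\nabla\psi}^2$ coming from $\phi = e^{\alpha\psi}$, the result will split into three pieces: a negative gradient-energy contribution $-(p-1)\int u^{p-2}\abs{\nabla u}^2$, a cross term proportional to $\alpha(p-2)\int u^{p-1}\nabla u\cdot\nabla\psi$, and a positive potential-like term $\alpha^2\int u^p\abs{\nabla\psi}^2$.

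Since integration by parts applied to $\int u^{p-1}\Lap u$ produces exactly the same gradient energy $-(p-1)\int u^{p-2}\abs{\nabla u}^2$, the key combination $2\int u^{p-1}Lu\,dV_g - \int u^{p-1}\Lap u\,dV_g$ collapses to a single negative gradient term of coefficient $-(p-1)$ plus twice the cross and potential terms. The idea is then to use $\abs{\nabla\psi}\leq 1$ together with Young's inequality $2ab\leq\delta a^2+\delta^{-1}b^2$ with $a=u^{p/2-1}\abs{\nabla u}$ and $b=u^{p/2}$ to dominate the cross term. Choosing $\delta$ so that the resulting $u^{p-2}\abs{\nabla u}^2$ contribution has coefficient exactly $p-1$ lets this portion cancel the remaining negative gradient energy, leaving only multiples of $\norm{u}_p^p$.

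The final step is a clean algebraic check: combining the coefficient $\alpha^2(p-2)^2/(p-1)$ from Young's inequality with the $2\alpha^2$ from the potential term produces a total coefficient of $\alpha^2(p^2-2p+2)/(p-1)$, and one verifies that $(p^2-2p+2)/(p-1)\leq p$ is equivalent to $p\geq 2$, which is precisely the hypothesis. I do not anticipate any serious difficulty; the only care needed is in tuning the Young's parameter $\delta$ so the gradient term is absorbed with nothing left over (any looser choice would yield a constant worse than $\alpha^2 p$ and break the sharpness needed later for ultracontractivity). The borderline case $p=2$ is essentially immediate, since the cross term vanishes and only the potential term $2\alpha^2\int u^2\abs{\nabla\psi}^2 \leq 2\alpha^2\norm{u}_2^2$ remains.
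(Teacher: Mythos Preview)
Your proposal is correct and essentially identical to the paper's proof: both expand $\int u^{p-1}Lu$ via integration by parts into the same three pieces, use $\abs{\nabla\psi}\leq 1$, apply Young's inequality with the same parameter $\delta=s=\tfrac{p-1}{\abs{\alpha}(p-2)}$ to absorb the cross term exactly into the gradient energy, and finish with the same algebraic check $\tfrac{p^2-2p+2}{p-1}\leq p$ for $p\geq 2$ (handling $p=2$ separately). The only cosmetic difference is that you form the combination $2\int u^{p-1}Lu-\int u^{p-1}\Lap u$ before applying Young, whereas the paper carries a free parameter in front of $\int u^{p-1}\Lap u$ and tunes it to $\tfrac12$ afterwards.
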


\begin{proof}
Compute, using integration by parts,
\begin{equation}\label{eq.davies}
\begin{split}
\int_M u^{p-1}L u\,dV_g &=-\int_M \nabla(\phi u)\nabla(\phi^{-1}u^{p-1})dV_g\\ 
&=\int_M \Big(\alpha^2 u^p\abs{\nabla\psi}^2-\alpha(p-2)u^{p-1}\nabla u\cdot\nabla\psi -(p-1)u^{p-2}\abs{\nabla u}^2\Big)dV_g\\
&\leq \alpha^2\norm{u}_p^p+\abs{\alpha}(p-2)\int_M u^{p-1}\abs{\nabla u}\,dV_g -\int_M \nabla u^{p-1}\nabla u\,dV_g\\
&\leq \big(\alpha^2+\tfrac{\abs{\alpha}(p-2)}{2s}\big)\norm{u}_p^p + \big(1-\tfrac{\abs{\alpha}(p-2)s}{2(p-1)}\big)\int_M u^{p-1}\Lap u\,dV_g,
\end{split}
\end{equation}
where the last line follows by estimating
\begin{align*}
2\int_M u^{p-1}\abs{\nabla u}\,dV_g &\leq s\int_M \big(u^{p/2-1}\abs{\nabla u}\big)^2dV_g + s^{-1}\int_M u^p dV_g\\
&= -\tfrac{s}{p-1}\int_M u^{p-1}\Lap u\,dV_g + s^{-1}\norm{u}_p^p.
\end{align*}
For $p>2$, the claimed inequality follows from \eqref{eq.davies} by defining $s:=\frac{p-1}{\abs{\alpha}(p-2)}$ and estimating the coefficient in front of $\norm{u}_p^p$ as follows,
\begin{equation*}
\big(\alpha^2+\tfrac{\abs{\alpha}(p-2)}{2s}\big) = \tfrac{\alpha^2}{2}\big(2+\tfrac{(p-2)^2}{p-1}\big) = \tfrac{\alpha^2}{2}\big(p+\tfrac{2-p}{p-1}\big)\leq \tfrac{\alpha^2}{2}p.
\end{equation*}
For $p=2$, we obtain from \eqref{eq.davies},
\begin{align*}
\int_M u^{p-1}L u\,dV_g &\leq \alpha^2\norm{u}_p^p + \int_M u^{p-1}\Lap u\,dV_g\\
&\leq \tfrac{\alpha^2}{2}p\norm{u}_p^p + \tfrac{1}{2}\int_M u^{p-1}\Lap u\,dV_g,
\end{align*}
where we added $-\tfrac{1}{2}\int_M u^{p-1}\Lap u\,dV_g = \tfrac{p-1}{2}\int_M u^{p-2}\abs{\nabla u}^2\,dV_g\geq 0$ in the last step.
\end{proof}

In the following, let $(M,g(t))$ be a solution to the Ricci flow on $[0,T)$ and let $\psi:M\times[0,T)\to\RR$ be a smooth function satisfying $\abs{\nabla\psi_t}_{g(t)}\leq 1$, where $\psi_t(\cdot)=\psi(\cdot,t)$. For such a $\psi$, define $L_t:=\phi_t^{-1}\Lap_{g(t)}\phi_t$ with $\phi_t=e^{\alpha\psi_t}$ for some $\alpha\in\RR$.

\begin{cor}[$L^p$-logarithmic Sobolev inequality involving the weighted operator $L$]\label{cor.logsob1}
Let $(M^n,g(t))$ be a solution to the Ricci flow in dimension $n\geq 3$ on $[0,T)$, $T<\infty$, and let $L_t=\phi_t^{-1}\Lap_{g(t)}\phi_t$ be as above. For every $\eps>0$, $t\in[0,T)$, $2\leq p<\infty$, and $0\leq u\in C^{\infty}_c(M)$, there holds
\begin{equation}\label{eq.logsob}
\begin{split}
\int_M u^p\log u\, dV_{g(t)} &\leq -\eps\int_M u^{p-1}L_t u\,dV_{g(t)} +\tfrac{p-1}{2p^2}\eps\int_M R_{g(t)}u^p\,dV_{g(t)}\\
&\quad\, +\widehat{\gamma}(\eps,p,t)\norm{u}_p^p +\norm{u}_p^p\log\norm{u}_p,
\end{split}
\end{equation}
where $\widehat{\gamma}(\eps,p,t):=\tfrac{1}{p}\big({-\frac{n}{2}}\log \eps + A+B\big(t+\tfrac{\eps}{2}\big)\big)+\tfrac{\eps\alpha^2p}{2}$ with $A$, $B$ as in Proposition \ref{prop.logsob}.
\end{cor}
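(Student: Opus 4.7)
The plan is to combine Lemma \ref{lemma.logsob} with Lemma \ref{lemma.davies1}, both applied at the fixed time $t$ on the Riemannian manifold $(M,g(t))$, and then absorb the resulting error into a marginally enlarged constant.

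First, I would apply Lemma \ref{lemma.davies1} with metric $g(t)$ and weight $\phi_t$, multiply through by $\tfrac{\eps}{2}>0$, and rearrange to obtain
\begin{equation*}
-\tfrac{\eps}{2}\int_M u^{p-1}\Lap_{g(t)} u\,dV_{g(t)} \leq -\eps\int_M u^{p-1}L_t u\,dV_{g(t)} + \tfrac{\eps\alpha^2 p}{2}\norm{u}_p^p,
\end{equation*}
valid for every $\eps>0$ and every $p\geq 2$. This is the key step: it trades the Laplacian for the weighted operator $L_t$ at the cost of an $\alpha^2$ error term.

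Next, I would apply Lemma \ref{lemma.logsob} at time $t$ with the \emph{same} parameter $\eps$ and substitute the inequality above into its right-hand side. The coefficients match exactly, since the $\tfrac{\eps}{2}$ in front of the Laplacian in Lemma \ref{lemma.logsob} is precisely what Lemma \ref{lemma.davies1} provides after multiplication by $\tfrac{\eps}{2}$. One thus arrives at
\begin{equation*}
\int_M u^p\log u\,dV_{g(t)} \leq -\eps\int_M u^{p-1}L_t u\,dV_{g(t)} + \tfrac{p-1}{2p^2}\eps\int_M R_{g(t)} u^p\,dV_{g(t)} + \Big(\widetilde{\gamma}(\eps,p,t)+\tfrac{\eps\alpha^2 p}{2}\Big)\norm{u}_p^p + \norm{u}_p^p\log\norm{u}_p,
\end{equation*}
so that the only remaining task is the elementary comparison $\widetilde{\gamma}(\eps,p,t)+\tfrac{\eps\alpha^2 p}{2}\leq\widehat{\gamma}(\eps,p,t)$. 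Since $p\geq 2$ gives $\tfrac{2(p-1)}{p}\geq 1$, we have $-\tfrac{n}{2}\log\tfrac{2(p-1)\eps}{p}\leq -\tfrac{n}{2}\log\eps$, and similarly $\tfrac{(p-1)\eps}{2p}\leq\tfrac{\eps}{2}$; together with the non-negativity of $B$ in the Ye--Zhang inequality, these prove the desired inequality.

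No genuine obstacle is expected, as the argument is essentially a chained application of two earlier lemmas followed by an algebraic comparison of constants. The only real piece of bookkeeping is to use the \emph{same} $\eps$-parameter in both lemmas so that the coefficients of $\int_M u^{p-1}\Lap_{g(t)} u\,dV_{g(t)}$ combine cleanly and produce the clean factor $-\eps$ in front of $\int_M u^{p-1}L_t u\,dV_{g(t)}$, which is exactly the form needed for the ultracontractivity estimates carried out in the next subsection.
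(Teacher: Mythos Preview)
Your proposal is correct and follows essentially the same route as the paper's proof: rearrange Lemma~\ref{lemma.davies1} (after multiplying by $\tfrac{\eps}{2}$) to replace the Laplacian term in Lemma~\ref{lemma.logsob} by the weighted operator $L_t$, and then simplify $\widetilde{\gamma}$ to $\widehat{\gamma}$ via $\tfrac{2(p-1)}{p}\geq 1$ and $\tfrac{p-1}{2p}\leq\tfrac{1}{2}$ for $p\geq 2$. Your explicit invocation of $B\geq 0$ is a point the paper leaves implicit but indeed uses.
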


\begin{proof}
This follows directly upon plugging Lemma \ref{lemma.davies1} in the form
\begin{equation*}
-\tfrac{1}{2}\eps\int_M u^{p-1}\Lap u\,dV_{g(t)}\leq -\eps\int_M u^{p-1}L_t u\,dV_{g(t)}+\tfrac{\eps\alpha^2p}{2}\norm{u}_p^p
\end{equation*}
into Lemma \ref{lemma.logsob} and estimating
\begin{align*}
\widetilde{\gamma}(\eps,p,t)&=\tfrac{1}{p}\big({-\tfrac{n}{2}}\log\eps-\tfrac{n}{2}\log\big(\tfrac{2(p-1)}{p}\big) + A+B\big(t+\tfrac{(p-1)}{2p}\eps\big)\big)\\
&\leq \tfrac{1}{p}\big({-\tfrac{n}{2}}\log\eps + A+B\big(t+\tfrac{\eps}{2}\big)\big).\qedhere
\end{align*}
\end{proof}

This corollary will be used to prove ultracontractivity estimates in the spirit of Davies \cite{D87} for the weighted heat operator $\dt-L_t$, that is, we show that its semigroup is a contraction semigroup from $L^2$ to $L^\infty$ (see Subsection \ref{subsec.ultra}). The main difference to the static case result of Davies is the presence of the scalar curvature term in \eqref{eq.logsob} requiring some subtle modifications of his arguments. Moreover, the duality argument used by Davies to show that this semigroup is also a contraction semigroup from $L^1$ to $L^2$ does not work in our setting of an evolving manifold. Hence, we need to develop new estimates for this step (see Subsection \ref{subsec.L12}), which require an $L^p$-logarithmic Sobolev inequality for $1<p<2$, derived in Corollary \ref{cor.logsob2} below. We first prove a result similar to Lemma \ref{lemma.davies1}.

\begin{lemma}\label{lemma.davies2}
For every complete manifold $(M^n,g)$, $0\leq u\in C^{\infty}_c(M)$, $1< p\leq 2$, and $L=\phi^{-1}\Lap_g\phi$ as before, we have
\begin{equation*}
2\int_M u^{p-1}Lu\,dV_g\leq\int_M u^{p-1}\Lap u\,dV_g+\alpha^2 \tfrac{p}{p-1} \norm{u}_p^p.
\end{equation*}
\end{lemma}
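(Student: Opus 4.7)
The plan is to mirror the computation in the proof of Lemma \ref{lemma.davies1}, tracking the sign change of the factor $(p-2)$ in the range $1<p\leq 2$. For $1<p<2$ the starting point is the integration-by-parts identity already derived there,
\[
\int_M u^{p-1}L u\,dV_g = \alpha^2\int_M u^p\abs{\nabla\psi}^2 dV_g - \alpha(p-2)\int_M u^{p-1}\nabla u\cdot\nabla\psi\,dV_g + \int_M u^{p-1}\Lap u\,dV_g,
\]
which holds for any $p>1$. The coefficient $-\alpha(p-2)=\alpha(2-p)$ in front of the cross term now has the opposite sign relative to the $p>2$ case, but by $\abs{\nabla\psi}\leq 1$ one still bounds its absolute value by $\abs{\alpha}(2-p)\int_M u^{p-1}\abs{\nabla u}\,dV_g$.

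Next I would apply the same Young-type estimate as in Lemma \ref{lemma.davies1},
\[
2\int_M u^{p-1}\abs{\nabla u}\,dV_g \leq -\tfrac{s}{p-1}\int_M u^{p-1}\Lap u\,dV_g + s^{-1}\norm{u}_p^p,
\]
valid for any $s>0$. Substituting this in yields
\[
\int_M u^{p-1}L u\,dV_g \leq \big(\alpha^2 + \tfrac{\abs{\alpha}(2-p)}{2s}\big)\norm{u}_p^p + \big(1 - \tfrac{\abs{\alpha}(2-p)s}{2(p-1)}\big)\int_M u^{p-1}\Lap u\,dV_g.
\]
The key choice is $s:=\tfrac{p-1}{\abs{\alpha}(2-p)}$, which makes the coefficient of $\int_M u^{p-1}\Lap u\,dV_g$ exactly $\tfrac{1}{2}$. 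Multiplying through by $2$ then gives
\[
2\int_M u^{p-1}L u\,dV_g \leq \alpha^2\cdot\tfrac{p^2-2p+2}{p-1}\norm{u}_p^p + \int_M u^{p-1}\Lap u\,dV_g.
\]

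The proof then concludes via the elementary algebraic inequality $\tfrac{p^2-2p+2}{p-1}\leq \tfrac{p}{p-1}$, which rearranges to $(p-1)(p-2)\leq 0$ and so holds precisely on $1<p\leq 2$. This is the exact mirror of the final algebraic step in Lemma \ref{lemma.davies1}, where the analogous bound $p+\tfrac{2-p}{p-1}\leq p$ was used on the complementary range $p\geq 2$. The boundary case $p=2$ must be handled separately (since the chosen $s$ is infinite) exactly as in Lemma \ref{lemma.davies1}: the cross term vanishes identically, giving $2\int_M uLu\,dV_g\leq 2\alpha^2\norm{u}_2^2+2\int_M u\Lap u\,dV_g$, and one copy of $\int_M u\Lap u\,dV_g=-\int_M\abs{\nabla u}^2\,dV_g\leq 0$ can be dropped to recover $\tfrac{p}{p-1}=2$. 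The degenerate case $\alpha=0$ is trivial since then $L=\Lap$. I do not foresee any real obstacle: beyond sign-tracking, the only content is the observation that $(p-1)(p-2)\leq 0$ characterises the range complementary to the one used in the previous lemma — a pleasant structural symmetry rather than a hurdle.
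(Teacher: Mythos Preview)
Your argument is correct: the direct computation goes through in the range $1<p\leq 2$ exactly as you describe, and the algebraic inequality $\tfrac{p^2-2p+2}{p-1}\leq\tfrac{p}{p-1}$ is indeed equivalent to $(p-1)(p-2)\leq 0$, neatly complementary to the step in Lemma~\ref{lemma.davies1}. The boundary cases $p=2$ and $\alpha=0$ are handled correctly.

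However, the paper takes a genuinely different and shorter route. Rather than rerunning the computation, it uses a duality substitution: set $v:=u^{p-1}$ and $q:=\tfrac{p}{p-1}\in[2,\infty)$, introduce the adjoint operator $L^*:=\phi\Lap_g\phi^{-1}$ (which is of the same form as $L$ with $\alpha$ replaced by $-\alpha$, hence the same $\alpha^2$), observe that $\int_M u^{p-1}Lu\,dV_g=\int_M v^{q-1}L^*v\,dV_g$, and then apply Lemma~\ref{lemma.davies1} directly to $(v,q,L^*)$. The identities $\int_M v^{q-1}\Lap v\,dV_g=\int_M u^{p-1}\Lap u\,dV_g$ and $\norm{v}_q^q=\norm{u}_p^p$ then give the claim in one line. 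This approach avoids repeating the Young-inequality optimisation and makes the conjugate-exponent symmetry between the two lemmas explicit at the level of the operators rather than at the level of an algebraic coincidence. Your direct approach, on the other hand, is entirely self-contained and makes transparent that no new analytic ingredient is needed beyond sign-tracking; it also sidesteps the (minor) regularity question of whether $v=u^{p-1}$ with $p-1<1$ is admissible as an input to Lemma~\ref{lemma.davies1}.
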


\begin{proof}
Set $v:= u^{p-1}$ and $q=\frac{p}{p-1}>2$. For $L=\phi^{-1}\Lap_g\phi$, we set $L^*:=\phi\Lap_g\phi^{-1}$. Now, applying Lemma \ref{lemma.davies1} to $v$ and $L^*$, we obtain
\begin{align*}
2\int_M u^{p-1}Lu \,dV_g &=2\int_M u L^*(u^{p-1})\, dV_g =2\int_M v^{q-1}L^*v\, dV_g\\
&\leq \int_M v^{q-1}\Lap v \, dV_g + \alpha^2 q\norm{v}_q^q = \int_M v\Lap (v^{q-1})\, dV_g + \alpha^2 q\norm{v}_q^q\\
&=\int_M u^{p-1}\Lap u\,dV_g +\alpha^2 \tfrac{p}{p-1} \norm{u}_p^p.\qedhere
\end{align*}
\end{proof}

\begin{cor}[$L^p$-logarithmic Sobolev inequality for $L$ with $1<p\leq 2$]\label{cor.logsob2}
Let $(M^n,g(t))$ be a solution to the Ricci flow on $[0,T)$ with $n\geq 3$ and let $L_t=\phi_t^{-1}\Lap_{g(t)}\phi_t$ be as above. For every $\eps>0$, $t\in[0,T)$, $1< p\leq 2$, and $0\leq u\in C^{\infty}_c(M)$, the Sobolev inequality \eqref{eq.logsob} holds with
\begin{equation*}
\widehat{\gamma}(\eps,p,t):=\tfrac{1}{p}\big({-\tfrac{n}{2}}\log\big(\tfrac{2(p-1)}{p}\,\eps\big)+ A+B\big(t+\tfrac{\eps}{4}\big)\big)+\tfrac{\eps\alpha^2p}{2(p-1)},
\end{equation*}
where $A$, $B$ are as in Proposition \ref{prop.logsob} above.
\end{cor}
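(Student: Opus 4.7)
The plan is to mirror the proof of Corollary \ref{cor.logsob1} step by step, replacing the use of Lemma \ref{lemma.davies1} (valid for $p\geq 2$) by its companion Lemma \ref{lemma.davies2} (valid for $1<p\leq 2$). Since Lemma \ref{lemma.logsob} is already formulated for the full range $1<p<\infty$, no new Sobolev-type input is required here; the only piece that needs adapting is the comparison between $\int u^{p-1}\Lap u\,dV$ and $\int u^{p-1}L_t u\,dV$.

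Concretely, I would rewrite Lemma \ref{lemma.davies2} in the form
\begin{equation*}
-\tfrac{\eps}{2}\int_M u^{p-1}\Lap u\,dV_{g(t)} \leq -\eps\int_M u^{p-1}L_t u\,dV_{g(t)} + \tfrac{\eps\alpha^2 p}{2(p-1)}\norm{u}_p^p,
\end{equation*}
and plug this bound into the right-hand side of Lemma \ref{lemma.logsob}. This immediately trades the $\Lap$-term for an $L_t$-term at the cost of the extra additive summand $\tfrac{\eps\alpha^2 p}{2(p-1)}\norm{u}_p^p$, which will become the second contribution to $\widehat{\gamma}(\eps,p,t)$.

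It then remains to repackage the original constant $\widetilde{\gamma}(\eps,p,t)$ from Lemma \ref{lemma.logsob} into the form advertised in the statement. For $1<p\leq 2$ one has $\tfrac{p-1}{2p}\leq \tfrac{1}{4}$, so (using that $B\geq 0$, as in the Ye--Zhang inequality) the time-shift term satisfies $B\bigl(t+\tfrac{(p-1)}{2p}\eps\bigr)\leq B\bigl(t+\tfrac{\eps}{4}\bigr)$. In contrast to the $p\geq 2$ case treated in Corollary \ref{cor.logsob1}, the factor $\tfrac{2(p-1)}{p}$ inside the logarithm must now be \emph{kept}: for $p\leq 2$ it is at most $1$, so $-\tfrac{n}{2}\log\bigl(\tfrac{2(p-1)}{p}\bigr)$ is nonnegative and in fact diverges as $p\searrow 1$, and therefore cannot be absorbed into a constant. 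Combining these two observations produces exactly the formula for $\widehat{\gamma}(\eps,p,t)$ claimed in the corollary.

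I do not foresee any genuine obstacle here; the argument is a direct adaptation of Corollary \ref{cor.logsob1}, and the only subtlety worth flagging is the one just mentioned, namely the need to retain the $\log\tfrac{2(p-1)}{p}$ term explicitly so that the estimate remains meaningful (even if it degenerates) as $p$ approaches $1$.
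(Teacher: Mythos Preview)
Your proposal is correct and follows precisely the approach of the paper, whose proof reads in its entirety: ``Identical to the proof of Corollary \ref{cor.logsob1}, but using Lemma \ref{lemma.davies2} instead of Lemma \ref{lemma.davies1}.'' You have simply unpacked this one-line argument, including the two bookkeeping points (retaining the $\log\tfrac{2(p-1)}{p}$ term and using $\tfrac{p-1}{2p}\leq\tfrac14$ for $p\leq 2$) that distinguish the resulting $\widehat{\gamma}$ from the one in Corollary \ref{cor.logsob1}.
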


\begin{proof}
Identical to the proof of Corollary \ref{cor.logsob1}, but using Lemma \ref{lemma.davies2} instead of Lemma \ref{lemma.davies1}.
\end{proof}

\subsection{Ultracontractivity Estimates}\label{subsec.ultra}
Here we prove that $\dt-L_t$ (with $L_t=\phi_t^{-1}\Lap_{g(t)}\phi_t$ as in the last subsection, i.e. $\phi_t=e^{\alpha\psi_t}$ for $\alpha\in\RR$ and smooth $\psi_t:M\to\RR$ satisfying $\abs{\nabla\psi_t}_{g(t)}\leq 1$) has an ultracontractive semigroup along a compact Ricci flow. This is stated more precisely in the following Lemma.

\begin{lemma}[Ultracontractivity estimates for the weighted heat operator $\dt-L_t$]\label{lemma.ultra}
Let $(M^n,g(t))$ be a solution to the Ricci flow $\dt g=-2\Rc$ on a \emph{positive and finite} time interval $[0,T)$ and assume that the underlying manifold is \emph{closed} (i.e.~compact and without boundary) and has dimension $n\geq 3$. Let $0\leq u\in C^\infty(M\times[t_0,t_1])$ be a solution of the weighted heat equation $\dt u=L_tu$, where $[t_0,t_1]\in[0,T)$. Then the $L^\infty$-norm of $u(t_1)$ (taken with respect to $g(t_1)$) is controlled by the $L^2$-norm of $u(t_0)$ (taken with respect to $g(t_0)$) via the following estimate
\begin{equation}\label{eq.ultra}
\norm{u(\cdot,t_1)}_{\infty,g(t_1)}\leq \frac{C_1}{(t_1-t_0)^{n/4}}\,e^{2\alpha^2(t_1-t_0)}\norm{u(\cdot,t_0)}_{2,g(t_0)},
\end{equation}
where $C_1$ depends only on $n$, $T$, and $(M,g_0)$.
\end{lemma}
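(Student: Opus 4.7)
The plan is to follow Davies' strategy and control $N(t):=\|u(\cdot,t)\|_{p(t),g(t)}$ along the flow for a suitable smooth increasing profile $p:[t_0,t_1]\to[2,\infty]$ with $p(t_0)=2$ and $p(t_1)=\infty$. First, differentiating $\log N(t)$ and using $\dt u=L_t u$ together with the volume evolution $\dt dV_{g(t)}=-R\,dV_{g(t)}$ along Ricci flow, one obtains
\begin{equation*}
(\log N)'(t) = \tfrac{1}{N^p}\int_M u^{p-1}L_t u\,dV_{g(t)} + \tfrac{\dot p}{p N^p}\int_M u^p\log u\,dV_{g(t)} - \tfrac{1}{pN^p}\int_M R u^p\,dV_{g(t)} - \tfrac{\dot p}{p}\log N.
\end{equation*}

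The second step is to apply Corollary \ref{cor.logsob1} to the $\int u^p\log u$ term with the tailored choice $\eps := p/\dot p$. The point of this choice is that the $-\eps\int u^{p-1}L_tu$ term produced by the log-Sobolev inequality, after being multiplied by $\dot p/p$, exactly cancels the bare $\int u^{p-1}L_t u$ contribution, while the $\tfrac{\dot p}{p}\log N$ pieces also cancel. The scalar curvature contributions combine into $-\tfrac{p+1}{2p^2}\int R u^p\,dV_{g(t)}/N^p$, which is uniformly bounded since the maximum principle gives $R_{g(t)}\geq R_{\min}(0)$ along Ricci flow. The resulting differential inequality reads
\begin{equation*}
(\log N)'(t) \leq \tfrac{\dot p}{p^2}\Bigl(-\tfrac{n}{2}\log\tfrac{p}{\dot p}+A+B\bigl(t+\tfrac{p}{2\dot p}\bigr)\Bigr) + \tfrac{\alpha^2 p}{2} + O(1),
\end{equation*}
where the $\alpha^2 p/2$ term arises from the $\tfrac{\eps\alpha^2 p}{2}$ summand in $\widehat{\gamma}$, itself produced by Lemma \ref{lemma.davies1}.

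The key step is the choice $p(t):=2\sqrt{(t_1-t_0)/(t_1-t)}$, which yields $\dot p/p=\tfrac{1}{2(t_1-t)}$, $\eps=p/\dot p=2(t_1-t)$, and the clean simplification $t+p/(2\dot p)=t_1$. A direct computation then gives $\int_{t_0}^{t_1}\tfrac{\alpha^2 p}{2}\,dt=2\alpha^2(t_1-t_0)$ exactly, while $\int_{t_0}^{t_1}\tfrac{\dot p}{p^2}(-\tfrac{n}{2}\log\eps)\,dt=-\tfrac{n}{4}\log(t_1-t_0)+O(1)$, and all remaining contributions are bounded by constants depending only on $n$, $T$, and $g(0)$. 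Integrating the inequality from $t_0$ to $t_1-\delta$ and sending $\delta\to 0^+$ (using uniform volume bounds along the flow to take the limit $\|u(t_1-\delta)\|_{p(t_1-\delta)}\to \|u(t_1)\|_\infty$) produces \eqref{eq.ultra}. The main obstacle is precisely this tuning of $p(t)$: the classical Davies profile $p(t)=2(t_1-t_0)/(t_1-t)$ would render $\int\alpha^2 p\,dt$ divergent along the flow, so a slower, square-root blow-up is forced in order to yield exactly the finite $\alpha^2$-contribution $2\alpha^2(t_1-t_0)$ matching the claimed bound.
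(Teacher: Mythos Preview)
Your proof is correct and essentially identical to the paper's: your explicit profile $p(t)=2\sqrt{(t_1-t_0)/(t_1-t)}$ is precisely the paper's implicit choice $t=t_1-4(t_1-t_0)p^{-2}$ (equivalently $\eps(q)=8(t_1-t_0)q^{-2}$, so that $\eps=p/\dot p=2(t_1-t)$), and the subsequent application of Corollary~\ref{cor.logsob1} and integration match. The only cosmetic differences are that you work with $(\log N)'$ rather than $N'$, you obtain the scalar-curvature coefficient $-\tfrac{p+1}{2p^2}$ (versus the paper's $\tfrac{p-1}{2p^2}-1$, a harmless discrepancy in bookkeeping the volume term), and you use the exact identity $t+\eps/2=t_1$ where the paper simply bounds by $T$.
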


\begin{proof}
We modify the ideas of Davies \cite{D87} in such a way that they work under Ricci flow. Set 
\begin{equation*}
\eps(q):=8(t_1-t_0)q^{-2}
\end{equation*}
and define $p(t)\geq 2$ for $t\in[t_0,t_1)$ by the implicit formula
\begin{equation*}
t=t_0+\int_2^p \tfrac{\eps(q)}{q}\,dq=t_1-4(t_1-t_0)p^{-2}.
\end{equation*}
In particular, we have $p(t_0)=2$ and $p(t)\to\infty$ as $t\to t_1$. With $\dt dV_{g(t)}=-R_{g(t)}dV_{g(t)}$ and $p':=\dt p=\frac{p}{\eps(p)}$, we compute, using the notation $\norm{u}_{p}=\norm{u(\cdot,t)}_{p(t),g(t)}$,
\begin{align*}
\frac{d}{dt}\norm{u}_{p} &= \frac{d}{dt}\bigg(\Big(\int_M u^{p(t)}(\cdot,t)dV_{g(t)}\Big)^{1/p(t)}\bigg)\\
&=-\tfrac{p'}{p^2}\norm{u}_{p}\log\norm{u}_p^p+\tfrac{1}{p}\norm{u}_p^{1-p}\Big( p'\int_M u^p\log u\,dV_{g(t)} +p\int_M (u^{p-1}\dt u-R_{g(t)}u^p)\,dV_{g(t)}\Big)\\
&=-\tfrac{1}{\eps(p)}\norm{u}_{p}\log\norm{u}_p+\norm{u}_p^{1-p}\Big( \tfrac{1}{\eps(p)} \int_M u^p\log u\,dV_{g(t)} +\int_M u^{p-1}(L_t-R_{g(t)})u\,dV_{g(t)}\Big).
\end{align*}
Hence, by plugging in \eqref{eq.logsob}, we find
\begin{equation}\label{eq.dt1}
\begin{split}
\frac{d}{dt}\norm{u}_{p} &\leq (\tfrac{p-1}{2p^2}-1)\norm{u}_p^{1-p}\int_M R_{g(t)}u^p\,dV_{g(t)} +\tfrac{\widehat{\gamma}(\eps(p),p,t)}{\eps(p)}\norm{u}_p\\
&\leq \big(\max_{M}R_{g(t)}^{-}+\tfrac{\widehat{\gamma}(\eps(p),p,t)}{\eps(p)}\big)\norm{u}_p\\
&\leq \big(\max_{M}R_{g(0)}^{-}+\tfrac{\widehat{\gamma}(\eps(p),p,T)}{\eps(p)}\big)\norm{u}_p,
\end{split}
\end{equation}
where $R_{g(t)}^{-}:=\max\{-R_{g(t)},0\}$ and $\widehat{\gamma}(\eps(p),p,t)$ is defined as in Corollary \ref{cor.logsob1}. The second line follows using $-1\leq(\tfrac{p-1}{2p^2}-1)\leq-\frac{7}{8}$, and the last line is a consequence of the well-known fact that the minimum of the scalar curvature is non-decreasing along a compact Ricci flow (and thus $R_{g(t)}^{-}$ is non-increasing) combined with the obvious monotonicity of $\widehat{\gamma}(\eps,p,t)$ in $t$.\\

Next, we define 
\begin{equation*}
N(t):=\int_2^{p(t)}\tfrac{\widehat{\gamma}(\eps(q),q,T)}{q}\,dq + (t-t_0)\max_{M}R_{g(0)}^{-},
\end{equation*}
which satisfies $N(t_0)=0$ and has the derivative
\begin{equation*}
\frac{dN}{dt} = \frac{\widehat{\gamma}(\eps(p),p,T)}{p}\cdot p' +\max_{M}R_{g(0)}^{-}= \frac{\widehat{\gamma}(\eps(p),p,T)}{\eps(p)} +\max_{M}R_{g(0)}^{-}.
\end{equation*}
Therefore, by \eqref{eq.dt1},
\begin{equation*}
\frac{d}{dt}\Big(\norm{u}_p\, e^{-N(t)}\Big) = e^{-N(t)}\Big(\frac{d}{dt}\norm{u}_p - \frac{dN}{dt}\cdot\norm{u}_p\Big)\leq 0,
\end{equation*}
or equivalently 
\begin{equation*}
\norm{u(\cdot,t)}_{p(t),g(t)}\leq e^{N(t)}\norm{u(\cdot,t_0)}_{2,g(t_0)}
\end{equation*}
for all $t\in[t_0,t_1)$. Taking a limit as $t\to t_1$, we obtain
\begin{equation}
\norm{u(\cdot,t_1)}_{\infty,g(t_1)}\leq e^{N(t_1)}\norm{u(\cdot,t_0)}_{2,g(t_0)}.
\end{equation}
The claim now follows from
\begin{align*}
N(t_1) &= \int_2^\infty \tfrac{\widehat{\gamma}(\eps(p),p,T)}{p}\,dp + (t_1-t_0)\max_{M}R_{g(0)}^{-}\\
&=\int_2^\infty \Big(\tfrac{1}{p^2}\big({-\tfrac{n}{2}}\log\eps(p) + A+B\big(T+\tfrac{\eps(p)}{2}\big)\big)+\tfrac{\eps(p)\alpha^2}{2}\Big)dp + (t_1-t_0)\max_{M}R_{g(0)}^{-}\\
&=\int_2^\infty \tfrac{1}{p^2}\big({-\tfrac{n}{2}}\log(8(t_1-t_0))+A+BT+4\alpha^2(t_1-t_0)\big)dp\\
&\quad\,+\int_2^\infty \tfrac{1}{p^4}\big(4B(t_1-t_0)\big)dp + \int_2^\infty \tfrac{n\log p}{p^2}\,dp + (t_1-t_0)\max_{M}R_{g(0)}^{-},
\end{align*}
which, using 
\begin{equation*}
\int_2^\infty \tfrac{n\log p}{p^2}\,dp = -\tfrac{n}{p}(1+\log p)\Big|_2^\infty=\tfrac{n}{2}(1+\log 2),
\end{equation*}
integrates to
\begin{align*}
N(t_1) &= \tfrac{1}{2}\big({-\tfrac{n}{2}}\log(8(t_1-t_0))+A+BT+4\alpha^2(t_1-t_0)\big)\\
&\quad\,+ \tfrac{1}{24}\big(4B(t_1-t_0)\big)+\tfrac{n}{2}(1+\log 2)+ (t_1-t_0)\max_{M}R_{g(0)}^{-}\\
&\leq-\tfrac{n}{4}\log(t_1-t_0)+2\alpha^2(t_1-t_0)+C.
\end{align*}
Note that $C:=\big(\frac{2}{3}B+\max_{M}R_{g(0)}^{-}\big)T+\tfrac{1}{2}A+\tfrac{n}{2}$ depends only on $n$, $T$, and $(M,g_0)$ and setting $C_1:=e^C$ then yields \eqref{eq.ultra}.
\end{proof}

\begin{rem}
If the scalar curvature $R_{g(0)}$ is non-negative and positive at some point, then $B$ in Proposition \ref{prop.logsob} can be chosen to be zero (see \cite{Y07,Z10}). But then also $\max_{M}R_{g(0)}^{-}=0$ and therefore the constant $C_1$ in Lemma \ref{lemma.ultra} is independent of $T$. 
\end{rem}

\subsection{Estimating the $L^2$-Norm by the $L^1$-Norm}\label{subsec.L12}
Here we show that the semigroup of $\dt-L_t$ is also a contraction semigroup from $L^1$ (at some time) to $L^2$ (at a later time). On an evolving manifold, we cannot use a duality argument as in Davies \cite{D87} but instead repeat the strategy from above with suitable modifications. In particular, as we are in the case $1<p<2$, we have to use Corollary \ref{cor.logsob2} instead of Corollary \ref{cor.logsob1}, which will force us to chose $\eps(q)$ differently, but otherwise the argument is actually quite similar. We have the following estimate.

\begin{lemma}[Contraction estimates from $L^1$ to $L^2$]\label{lemma.L12}
Let $(M^n,g(t))$ be a solution to the Ricci flow on a \emph{positive and finite} time interval $[0,T)$ and assume that $M$ is \emph{closed} and has dimension $n\geq 3$. Let $0\leq u\in C^\infty(M\times[t_0,t_1])$ be a solution of the weighted heat equation $\dt u=L_tu$, where $[t_0,t_1]\in[0,T)$. Then the $L^2$-norm of $u(t_1)$ (taken with respect to $g(t_1)$) is controlled by the $L^1$-norm of $u(t_0)$ (taken with respect to $g(t_0)$) as follows
\begin{equation}\label{eq.L12}
\norm{u(\cdot,t_1)}_{2,g(t_1)}\leq \frac{C_2}{(t_1-t_0)^{n/4}}\,e^{2\alpha^2(t_1-t_0)}\norm{u(\cdot,t_0)}_{1,g(t_0)},
\end{equation}
where $C_2$ depends only on $n$, $T$, and $(M,g_0)$.
\end{lemma}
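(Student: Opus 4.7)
Since the duality argument of Davies is unavailable on an evolving manifold, the plan is to mirror the proof of Lemma \ref{lemma.ultra}, but now running the exponent $p(t)$ from $p(t_0)=1$ up to $p(t_1)=2$ and using the $L^p$-logarithmic Sobolev inequality of Corollary \ref{cor.logsob2} (valid for $1<p\leq 2$) in place of Corollary \ref{cor.logsob1}. The only qualitatively new feature is the term $\tfrac{\eps\alpha^2 p}{2(p-1)}$ in $\wh{\gamma}$, which is singular as $p\to 1^+$; this forces the choice of $\eps(p)$ to vanish linearly at $p=1$.

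Concretely, I would set $\eps(p):=c(p-1)$ with $c:=(t_1-t_0)/(1-\log 2)$ and define $p(t)\in[1,2]$ implicitly by
\begin{equation*}
t-t_0=\int_1^{p(t)}\tfrac{\eps(q)}{q}\,dq = c\bigl(p(t)-1-\log p(t)\bigr),
\end{equation*}
so that $p(t_0)=1$ and $p(t_1)=2$. Differentiating $\norm{u(\cdot,t)}_{p(t),g(t)}$ exactly as in Lemma \ref{lemma.ultra}, using $\dt dV_{g(t)}=-R\,dV_{g(t)}$ together with $p'=p/\eps(p)$, and inserting Corollary \ref{cor.logsob2} with this $\eps=\eps(p)$, the logarithmic terms and the $u^{p-1}L_tu$-terms will cancel just as before, leaving
\begin{equation*}
\frac{d}{dt}\norm{u}_p\leq\Bigl(\max_M R_{g(0)}^{-} + \frac{\wh{\gamma}(\eps(p),p,T)}{\eps(p)}\Bigr)\norm{u}_p,
\end{equation*}
using $\tfrac{p-1}{2p^2}-1\in[-1,-\tfrac{7}{8}]$ and that $\min_M R_{g(t)}$ is nondecreasing along Ricci flow.

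Setting $N(t):=\int_1^{p(t)}\tfrac{\wh{\gamma}(\eps(q),q,T)}{q}\,dq + (t-t_0)\max_M R_{g(0)}^{-}$ (an improper integral near $q=1$ that converges thanks to the scaling of $\eps$), the inequality $\tfrac{d}{dt}\bigl(\norm{u}_p\,e^{-N(t)}\bigr)\leq 0$ together with the continuity $\lim_{t\to t_0^+}\norm{u(\cdot,t)}_{p(t),g(t)}=\norm{u(\cdot,t_0)}_{1,g(t_0)}$ (from smoothness of $u$ and $g(t)$ on the compact manifold) then yields $\norm{u(\cdot,t_1)}_{2,g(t_1)}\leq e^{N(t_1)}\norm{u(\cdot,t_0)}_{1,g(t_0)}$. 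It remains to estimate $N(t_1)$: the potentially dangerous $\alpha^2$-term integrates to $\int_1^2\tfrac{c\alpha^2}{2}\,dp=\tfrac{c\alpha^2}{2}$ (the $(p-1)^{-1}$ singularity being exactly cancelled by $\eps(p)$), the $\log\bigl(\tfrac{2(q-1)}{q}\eps(q)\bigr)$-piece contributes $-\tfrac{n}{4}\log(2c)$ plus an absolute constant (since $\int_1^2 q^{-2}\log(q-1)\,dq$ is finite), and all remaining terms integrate to $O(1)$ constants depending only on $n$, $A$, $B$, $T$, and $\max_M R_{g(0)}^{-}$, hence only on $n$, $T$, and $(M,g_0)$. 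Since $c=O(t_1-t_0)$ and $\tfrac{1}{2(1-\log 2)}<2$ (equivalent to $\log 2<\tfrac{3}{4}$, which does hold), one concludes $N(t_1)\leq-\tfrac{n}{4}\log(t_1-t_0)+2\alpha^2(t_1-t_0)+C$, which gives \eqref{eq.L12} with $C_2=e^C$.

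The chief obstacle is the simultaneous degeneracy at $p=1$: the $L^p$-log-Sobolev inequality blows up via the $\tfrac{\eps\alpha^2 p}{2(p-1)}$-term, and the endpoint $\norm{u}_1$ is not naturally attained by the interpolation scheme. The linear scaling $\eps(p)\propto(p-1)$ cures both problems simultaneously, at the mild cost of the slightly enlarged (but still admissible) coefficient $\tfrac{1}{2(1-\log 2)}<2$ in front of $\alpha^2(t_1-t_0)$. The remaining subtleties (continuity of $\norm{u(\cdot,t)}_{p(t),g(t)}$ as $t\to t_0^+$, and convergence of the improper integrals at $q=1$) are routine given smoothness of $u$ and $g(t)$.
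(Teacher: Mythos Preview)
Your proposal is correct and follows essentially the same route as the paper: run $p(t)$ from $1$ to $2$, apply Corollary~\ref{cor.logsob2}, derive the same differential inequality for $\norm{u}_p$, and integrate $N(t_1)$ explicitly. The only difference is cosmetic: the paper chooses $\eps(q)=\tfrac{t_1-t_0}{\log 2-\frac{1}{2}}\cdot\tfrac{q-1}{q}$ (giving $\alpha^2$-coefficient $\tfrac{\log 2}{2\log 2-1}\approx 1.80$), whereas you take $\eps(q)=\tfrac{t_1-t_0}{1-\log 2}\,(q-1)$ (giving coefficient $\tfrac{1}{2(1-\log 2)}\approx 1.63$); both vanish linearly at $q=1$, which is the key point, and both coefficients are below $2$.
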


\begin{proof}
We follow the proof of Lemma \ref{lemma.ultra} but this time we set
\begin{equation*}
\eps(q):=\frac{(t_1-t_0)}{\log 2-\frac{1}{2}}\cdot\frac{q-1}{q}.
\end{equation*}
We define $p(t)\in[1,2]$ for $t\in[t_0,t_1]$ by the implicit formula
\begin{equation*}
t=t_0+\int_1^p \frac{\eps(q)}{q}\,dq =t_0+\frac{\log p+\frac{1}{p}-1}{\log 2-\frac{1}{2}}\,(t_1-t_0),
\end{equation*}
which implies $p(t_0)=1$ and $p(t_1)=2$. Now, we follow the computation of \eqref{eq.dt1} in the proof of Lemma \ref{lemma.ultra} above, using Corollary \ref{cor.logsob2} instead of Corollary \ref{cor.logsob1}. This gives
\begin{equation}\label{eq.dt2}
\frac{d}{dt}\norm{u}_{p} \leq \big(\max_{M}R_{g(0)}^{-}+\tfrac{\widehat{\gamma}(\eps(p),p,T)}{\eps(p)}\big)\norm{u}_p,
\end{equation}
where $\widehat{\gamma}(\eps(p),p,t)$ is now given by Corollary \ref{cor.logsob2}. Setting 
\begin{equation*}
N(t):=\int_1^{p(t)}\tfrac{\widehat{\gamma}(\eps(q),q,T)}{q}\,dq + (t-t_0)\max_{M}R_{g(0)}^{-},
\end{equation*}
implies again $\frac{d}{dt}(\norm{u}_p\, e^{-N(t)})\leq 0$, from which we conclude in particular the estimate
\begin{equation}
\norm{u(\cdot,t_1)}_{2,g(t_1)}\leq e^{N(t_1)}\norm{u(\cdot,t_0)}_{1,g(t_0)}.
\end{equation}
To finish the proof, we have to compute
\begin{equation*}
N(t_1) = \int_1^2 \tfrac{\widehat{\gamma}(\eps(p),p,T)}{p}\,dp + (t_1-t_0)\max_{M}R_{g(0)}^{-}.
\end{equation*}
Writing $\eps(p)=c(t_1-t_0)\frac{p-1}{p}$ with $c=(\log 2-\frac{1}{2})^{-1}$, the integral becomes
\begin{align*}
\int_1^2 \tfrac{\widehat{\gamma}(\eps(p),p,T)}{p}\,dp &=\int_1^2 \Big(\tfrac{1}{p^2}\big({-\tfrac{n}{2}}\log\big(\tfrac{2(p-1)}{p}\,\eps(p)\big)+ A+B\big(T+\tfrac{\eps(p)}{4}\big)\big)+\tfrac{\eps(p)\alpha^2}{2(p-1)}\Big)dp\\
&=\int_1^2 \tfrac{1}{p^2}\big({-\tfrac{n}{2}}\log(t_1-t_0)-\tfrac{n}{2}\log(2c)+A+BT\big)\,dp\\
&\quad\,+\int_1^2 \tfrac{p-1}{4p^3}\big(Bc(t_1-t_0)\big)\,dp+\int_1^2 \tfrac{1}{2p}\big(\alpha^2c(t_1-t_0)\big)\,dp\\
&\quad\,+\int_1^2 \tfrac{1}{p^2}\big({-n}\log\big(\tfrac{(p-1)}{p}\big)\big)\,dp,
\end{align*}
which integrates to
\begin{align*}
N(t_1) &=\tfrac{1}{2}\big({-\tfrac{n}{2}}\log(t_1-t_0)-\tfrac{n}{2}\log(2c)+A+BT\big)+\tfrac{1}{32}Bc(t_1-t_0)\\
&\quad\,+\tfrac{\log 2}{2}\alpha^2c(t_1-t_0) + \tfrac{n}{2}(1+\log 2) + (t_1-t_0)\max_{M}R_{g(0)}^{-}.
\end{align*}
The only nontrivial integration is the following,
\begin{align*}
\int_1^2 \tfrac{1}{p^2}\big({-n}\log\big(\tfrac{(p-1)}{p}\big)\big)\,dp &=
-\tfrac{n}{p}\Big((p-1)\big(\log\big(\tfrac{(p-1)}{p}\big)-1\big)\Big)\Big|_1^2\\
&= \tfrac{n}{2}(1+\log 2)+n\lim_{p\to1}\Big((p-1)\log\big(\tfrac{(p-1)}{p}\big)\Big)\\
&=\tfrac{n}{2}(1+\log 2),
\end{align*}
where the limit vanishes according to L'H\^{o}pital's rule. Now, setting
\begin{equation*}
C:=\Big(\big(\tfrac{1}{2}+\tfrac{1}{32\log 2-16}\big)B+\max_{M}R_{g(0)}^{-}\Big)T + \tfrac{A}{2}+\tfrac{n}{4}\log(2\log 2-1)+\tfrac{n}{2}(1+\log 2),
\end{equation*}
which depends only on $n$, $T$ and $(M,g(0))$, we find
\begin{equation*}
N(t_1)\leq-\tfrac{n}{4}\log(t_1-t_0)+\tfrac{\log 2}{2\log 2-1}\alpha^2(t_1-t_0) + C,
\end{equation*}
and the claimed estimate \eqref{eq.L12} follows, setting $C_2:=e^{C}$ and noting that $\frac{\log 2}{2\log 2-1}<2$.
\end{proof}

\begin{rem}
As in the last subsection, if the scalar curvature $R_{g(0)}$ is non-negative and positive at some point, the constant $C_2$ is independent of $T$, since $B$ and $\max_{M}R_{g(0)}^{-}$ vanish. 
\end{rem}

\subsection{Proofs of Theorem \ref{main.theorem} and Corollary \ref{cor.main}}

Combining the Lemmas \ref{lemma.ultra} and \ref{lemma.L12}, we obtain a proof of the Main Theorem \ref{main.theorem}.

\begin{proof}[Proof of Theorem \ref{main.theorem}]
Let $0\leq u\in C^{\infty}(M\times[s,t])$ be a solution of the weighted heat equation $\dt u= L_t u$ with $L_t$ as above. Then, using one after another Lemma \ref{lemma.ultra} (with $[t_0,t_1]=[\frac{s+t}{2},t]$) and Lemma \ref{lemma.L12} (with $[t_0,t_1]=[s,\frac{s+t}{2}]$), we find
\begin{equation*}
\norm{u(\cdot,t)}_{\infty,g(t)} \leq \frac{2^{n/4}C_1}{(t-s)^{n/4}}\,e^{\alpha^2(t-s)}\norm{u(\cdot,\tfrac{s+t}{2})}_{2,g(\frac{s+t}{2})} \leq \frac{2^{n/2}C_1 C_2}{(t-s)^{n/2}}\,e^{2\alpha^2(t-s)}\norm{u(\cdot,s)}_{1,g(s)}.
\end{equation*}
Since
\begin{equation*}
u(x,t)=\int_M K(x,t;y,s)u(y,s) dV_{g(s)}(y)
\end{equation*}
for the fundamental solution of $\dt-L_t$, this is equivalent to the claimed estimate \eqref{eq.mainlemma} with $C=2^{n/2}C_1 C_2$ (which depends only on $n$, $T$ and the initial metic $g(0)$).
\end{proof}

The Gaussian bounds in Corollary \ref{cor.main} can now be obtained from the estimate is Theorem \ref{main.theorem} by choosing the right $\alpha$ and $\psi$.

\begin{proof}[Proof of Corollary \ref{cor.main}]
i) To make the notation more transparent, assume that we want to prove the estimate \eqref{main.eq1} for $0\leq s_0 < t_0 < T$ and $x_0,y_0\in M$. In this first step, we let $\psi_t\equiv\psi$ be time-independent. We first set
\begin{equation}
\psi(z):=\tfrac{1}{\mu}\min\{d_{g(t_0)}(z,y_0),d_{g(t_0)}(x_0,y_0)\}
\end{equation}
with $\mu$ defined by
\begin{equation*}
\mu:=\sup_{\lambda\in[s_0,t_0]}\,\sup_{M\setminus L}\,\abs{\nabla d_{g(t_0)}(y_0,\cdot)}_{g(\lambda)},
\end{equation*}
and $L$ being the set where $d_{g(t_0)}(y_0,\cdot)$ is not differentiable. We also set
\begin{equation}
\alpha:=\frac{1}{4(t_0-s_0)}(\psi(y_0)-\psi(x_0)).
\end{equation}
If $\psi$ would be a permitted weight function, then we would obtain the following. The fundamental solutions $H(x,t;y,s)$ of $\dt-\Lap_{g(t)}$ and $K(x,t;y,s)$ of $\dt-L_t$ are related by
\begin{equation*}
H(x,t;y,s)=\phi(x)K(x,t;y,s)\phi(y)^{-1} = K(x,t;y,s)\,e^{\alpha(\psi(x)-\psi(y))},
\end{equation*}
and thus applying \eqref{eq.mainlemma} yields
\begin{equation*}
H(x,t;y,s)\leq \frac{C}{(t-s)^{n/2}}\,e^{2\alpha^2(t-s)+\alpha(\psi(x)-\psi(y))}.
\end{equation*}
In particular, we obtain
\begin{align*}
H(x_0,t_0;y_0,s_0) &\leq \frac{C}{(t_0-s_0)^{n/2}}\,e^{2\alpha^2(t_0-s_0)+\alpha(\psi(x_0)-\psi(y_0))}\\
&=\frac{C}{(t_0-s_0)^{n/2}}\,e^{-\frac{(\psi(x_0)-\psi(y_0))^2}{8(t_0-s_0)}}\\
&= \frac{C}{(t_0-s_0)^{n/2}}\,e^{-\frac{d_{g(t_0)}(x_0,y_0)^2}{8\mu(t_0-s_0)}}.
\end{align*}

We therefore would indeed have \eqref{main.eq1} if $\psi$ would be a permitted weight function -- but it is not smooth. However, $\psi$ satisfies $\abs{\nabla\psi}_{g(\lambda)}\leq 1$ for all $\lambda\in[s_0,t_0]$ in the weak sense that $\abs{\psi(z_1)-\psi(z_2)}\leq d_{g(\lambda)}(z_1,z_2)$ and it is constant outside a fixed geodesic ball around $y_0$ with radius $d_{g(t_0)}(x_0,y_0)$. We can thus approximate it by $C^{\infty}$ functions $\psi_k$ satisfying $\abs{\nabla\psi_k}_{g(\lambda)}\leq 1$ for all $\lambda\in[s_0,t_0]$ and uniformly converging to $\psi$. This finishes the proof.\\

ii) Now, we let $\psi_t$ be time-dependent. Assume aganin that we want to prove the estimate \eqref{main.eq2} for fixed $0\leq s_0 < t_0 < T$ and $x_0,y_0\in M$, we set
\begin{equation}
\psi_t(z):=\min\{d_{g(t)}(z,y_0),d_{g(t)}(x_0,y_0)\}
\end{equation}
and 
\begin{equation}
\alpha:=\frac{1}{4(t_0-s_0)}(\psi_{t_0}(y_0)-\psi_{t_0}(x_0)).
\end{equation}
We note that $\psi_{t_0}(y_0)=0$, thus $\alpha\leq 0$. Setting $\widetilde{H}(x,t;y_0,s_0):=\phi_t(x)K(x,t;y_0,s_0)\phi_t(y_0)^{-1}$, we then obtain
\begin{equation}\label{eq.Htilde}
\dt \widetilde{H}=\alpha\dt(\psi_t(x)-\psi_t(y))\widetilde{H}+\Lap_{g(t)}\widetilde{H} \geq -4\eta\abs{\alpha}\widetilde{H}+\Lap_{g(t)}\widetilde{H},
\end{equation}
where $\eta$ is given by 
\begin{equation*}
\eta:=\tfrac{1}{4}\sup_{\lambda\in[s_0,t_0]} \sup_{z\in M}\; \max\big\{\tfrac{\partial}{\partial\sigma} \psi_\sigma(z) \big|_{\sigma=\lambda}, 0\big\} = \tfrac{1}{4}\sup_{\lambda\in[s_0,t_0]} \sup_{z\in M}\; \max\big\{\tfrac{\partial}{\partial\sigma} d_{g(\sigma)}(z,y_0) \big|_{\sigma=\lambda}, 0\big\}.
\end{equation*}
Since we know that for $d_{g(\lambda)}(z,y_0)\geq d_{g(\lambda)}(x_0,y_0)$ we have $\psi_\lambda(z)= d_{g(\lambda)}(x_0,y_0)$, we can replace the supremum over $z\in M$ with the supremum over $z$ satisfying $d_{g(\lambda)}(z,y_0)\leq d_{g(\lambda)}(x_0,y_0)$. It is important to pick $\eta$ as a constant, not depending on $t$ or $x$. We then obtain from \eqref{eq.Htilde} that
\begin{equation*}
\widetilde{H}_\eta:=e^{4\eta\abs{\alpha}(t-s_0)}\widetilde{H}
\end{equation*}
satisfies $\dt \widetilde{H}_\eta\geq\Lap_{g(t)}\widetilde{H}_\eta$ and because we know that $\widetilde{H}_\eta(\cdot,t;y_0,s_0)\to\delta_{y_0}$ as $t\searrow s_0$, we can show that the fundamental solution $H(x,t;y_0,s_0)$ is bounded above by $\widetilde{H}_\eta(x,t;y_0,s_0)$ by using a comparison principle argument. Hence, using \eqref{eq.mainlemma}, we obtain
\begin{align*}
H(x,t;y_0,s_0)\leq \widetilde{H}_\eta(x,t;y_0,s_0) &\leq K(x,t;y_0,s_0)\,e^{\alpha(\psi_t(x)-\psi_t(y_0))}\,e^{4\eta\abs{\alpha}(t-s_0)}\\
&\leq \frac{C}{(t-s_0)^{n/2}}\,e^{2\alpha^2(t-s_0)} e^{\alpha(\psi_t(x)-\psi_t(y_0))}\,e^{4\eta\abs{\alpha}(t-s_0)}.
\end{align*}
Estimating at $(x_0,t_0)$ and plugging in $\alpha$ as defined above, this yields
\begin{align*}
H(x_0,t_0;y_0,s_0) &\leq \frac{C}{(t_0-s_0)^{n/2}}\,e^{-\frac{(\psi_{t_0}(x_0)-\psi_{t_0}(y_0))^2}{8(t_0-s_0)}}\,e^{\eta\abs{\psi_{t_0}(y_0)-\psi_{t_0}(x_0)}}\\
&\leq \frac{C}{(t_0-s_0)^{n/2}}\,e^{-\frac{d_{g(t_0)}^2(x_0,y_0)}{8(t_0-s_0)}\,+\,\eta d_{g(t_0)}(x_0,y_0)}.
\end{align*}
Of course, $\psi_t(z)$ satisfies $\abs{\nabla\psi_\lambda}_{g(\lambda)}\leq 1$ for all $\lambda\in[s_0,t_0]$ in the weak sense as in the proof of part i) above and we can again use an approximation argument, approximating $\psi(z,t)$ by smooth functions $\psi_k(z,t)$ satisfying the derivative bounds in a strong sense and converging uniformly.
\end{proof}

As mentioned in the introduction, versions of Theorem \ref{thm.cty} and \ref{thm.cz} follow immediately from Corollary \ref{cor.main}, as uniform curvature bounds imply a bound on $\mu$ while nonnegative Ricci curvature implies $\eta=0$.

\section{Other intrinsic geometric flows}\label{sec.OtherFlows}

In this section, we study flows of the form $\dt g=-2\Sc$, where $\Sc=(S_{ij})$ is a symmetric two-tensor with trace $S = g^{ij}S_{ij}$. We also define the tensor quantity
\begin{equation}
\begin{aligned}
\mathcal{D}(\Sc,X) &:= \dt S-\Lap S -2\abs{S_{ij}}^2 +4(\nabla_i S_{ij})X_j -2(\nabla_j S)X_j \\
&\quad + 2R_{ij}X_iX_j - 2S_{ij}X_iX_j
\end{aligned}
\end{equation}
for a vector field $X\in\Gamma(TM)$ as introduced by the first author in \cite{M10}. We note that this generalises the Ricci flow studied in the previous section, since Ricci flow satisfies $\mathcal{D}(\Rc,X)\equiv 0$ for all vector fields $X$ on $M$. Other examples include List's extended Ricci flow \cite{L08}, harmonic Ricci flow \cite{M12}, twisted K\"ahler-Ricci flow \cite{CS16} on Fano manifolds, or Lorentzian mean curvature flow \cite{H99} on Lorentzian manifolds of nonnegative sectional curvatures.\\

While several of these flows had been studied before, the first systematic treatment of geometric flows satisfying $\mathcal{D}(\Sc,X) \geq 0$, $\forall X\in\Gamma(TM)$ appeared in \cite{M10}, where the first author obtained the monotonicity of a forward and backward reduced volume quantity for such flows. Later, the monotonicity of analogues of Perelman's $\mathcal{F}$-energy and his $\mathcal{W}$-entropy were proven for such flows, see e.g. \cite{H10, GPT13, FZ16}. In fact, it can be seen from a straight-forward (but slightly lengthy) computation that if $(M, g(t))$ is a solution to $\dt g=-2\Sc$ on a closed manifold of dimension $n \geq 3$ for $t \in [0,T)$, $\partial_t \tau = -1$, and $f$ satisfies
\begin{equation*}
\partial_t f = -\Lap f + \abs{\nabla f}^2 - S + \tfrac{n}{2\tau}
\end{equation*}
as well as the normalisation
\begin{equation}\label{eq.normalisationf}
\int_M \frac{e^{-f}}{(4\pi \tau)^{n/2}}\, dV_{g(t)} = 1
\end{equation}
then the $\mathcal{W}$-entropy
\begin{equation*}
\mathcal{W}\left(g,f,\tau\right) := \int_M \left[\tau \left( S_{g(t)} + \left|\nabla f\right|^2_{g(t)} \right) + f - n\right] \frac{e^{-f}}{\left(4\pi \tau\right)^{\frac{n}{2}}}dV_{g(t)}
\end{equation*}
satisfies
\begin{equation}\label{eq.Wmono}
\frac{d}{dt} \mathcal{W} = \int_M 2\tau \left( \left| S_{ij} + \mathrm{Hess}\left(f\right) - \frac{g}{2\tau} \right|^2_{g(t)} + \mathcal{D}\left(\Sc, -\nabla f\right) \right) \frac{e^{-f}}{\left(4\pi \tau\right)^{\frac{n}{2}}} dV_{g(t)}.
\end{equation}
In particular, if $\mathcal{D}\left(\Sc,-\nabla f\right) \geq 0$ then $\mathcal{W}$ is non-decreasing.\\

Just like in the Ricci flow case, this $\mathcal{W}$-monotonicity can be used to derive logarithmic Sobolev inequalities similar to the ones in Proposition \ref{prop.logsob}.

\begin{prop}[Log-Sobolev inequality along flows with $\mathcal{D}(\Sc,X)\geq 0$, cf. \cite{FZ16}]\label{prop.sobS}
Let $(M^n,g(t))$ be a compact solution to $\dt g=-2\Sc$ with $n\geq 3$ on $[0,T)$, $T<\infty$ satisfying \eqref{eq.D}. For all $\eps>0$ and each $t\in[0,T)$, there holds
\begin{equation*}
\int_M v^2\log v^2\, dV_{g(t)} \leq \eps\int_M\big(\abs{\nabla v}^2+\tfrac{1}{4} S_{g(t)}v^2\big)dV_{g(t)} +\gamma(\eps,t),
\end{equation*}
for all $0\leq v\in C^{\infty}_c(M)$ with $\norm{v}_2=1$. Here, $S_{g(t)}$ denotes the trace of $\Sc_{g(t)}$ and 
\begin{equation*}
\gamma(\eps,t):=-\tfrac{n}{2}\log \eps + A+B\big(t+\tfrac{\eps}{4}\big),
\end{equation*}
where $A$, $B$ are constants depending only on $(M,g(0))$ and $S_{g(0)}$.
\end{prop}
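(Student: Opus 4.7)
The plan is to follow the approach of Ye \cite{Y07} and Zhang \cite{Z07,Z10} used in the Ricci flow case, with the generalised $\mathcal{W}$-entropy monotonicity \eqref{eq.Wmono} (which holds precisely under the hypothesis $\mathcal{D}(\Sc,X)\geq 0$) replacing Perelman's original monotonicity. The key algebraic observation is that the desired log-Sobolev inequality is just a rewriting of a lower bound for $\mathcal{W}$. Given $0\leq v\in C^\infty(M)$ with $\norm{v}_2=1$ and $\eps>0$, set $\tau := \eps/4$ and define $f$ via $v^2 = e^{-f}/(4\pi\tau)^{n/2}$, so that the normalisation \eqref{eq.normalisationf} holds automatically. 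A direct calculation using $\nabla f = -2 v^{-1}\nabla v$ then yields
\begin{equation*}
\mathcal{W}(g(t),f,\tau) = \eps\int_M\bigl(\abs{\nabla v}^2 + \tfrac{1}{4}S_{g(t)} v^2\bigr)dV_{g(t)} - \int_M v^2\log v^2\,dV_{g(t)} - \tfrac{n}{2}\log(\pi\eps) - n,
\end{equation*}
so the claimed inequality is equivalent to the bound $\mathcal{W}(g(t),f,\tau) \geq -\tfrac{n}{2}\log\pi - n - A - B(t+\eps/4)$.

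Next I would extend $(f,\tau)$ backward in time from $t$ down to $0$ along the flow, taking $\partial_s \tau = -1$ (so $\tau(0) = \tau+t$) and letting $f$ evolve by the conjugate heat equation $\partial_s f = -\Lap f + \abs{\nabla f}^2 - S + \tfrac{n}{2\tau}$ with terminal data $f$ at $s=t$; equivalently, $u := e^{-f}/(4\pi\tau)^{n/2}$ satisfies the (linear, backward) adjoint heat equation on the closed manifold $M$, whose solvability is standard, and the normalisation \eqref{eq.normalisationf} is preserved. A mild regularisation $v \rightsquigarrow (v^2+\delta)^{1/2}$ together with the strong maximum principle guarantees smoothness and strict positivity of the corresponding $u$ throughout, and $\delta$ is sent to $0$ at the end. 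By the hypothesis \eqref{eq.D}, $\mathcal{D}(\Sc,-\nabla f)\geq 0$, so the right-hand side of \eqref{eq.Wmono} is non-negative and we obtain
\begin{equation*}
\mathcal{W}(g(t),f(t),\tau(t)) \;\geq\; \mathcal{W}(g(0),f(0),\tau+t).
\end{equation*}

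The problem is thereby reduced to a lower bound for $\mathcal{W}$ on the fixed initial manifold $(M,g(0))$ at scale $\tau_0 = t+\eps/4$, which by reversing the algebraic identity above is equivalent to a \emph{static} log-Sobolev inequality on $(M,g(0))$ involving the trace $S_{g(0)}$ (rather than the usual scalar curvature) with constants depending linearly on $\tau_0$. Such an inequality follows from a standard Sobolev embedding argument on the compact manifold $(M,g(0))$ exactly as carried out in \cite{Z07,Z10} for Ricci flow and produces constants $A,B$ depending only on $(M,g(0))$ and $S_{g(0)}$. The main obstacle is verifying that no step of this chain uses any feature specific to $\Rc$; but each step is linear in $S = g^{ij}S_{ij}$ and depends on $\mathcal{D}(\Sc,-\nabla f)$ only through its non-negativity, so the argument carries over verbatim from the Ricci flow case and yields the claimed estimate.
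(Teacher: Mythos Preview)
Your proposal is correct and follows essentially the same route as the paper: both reduce the time-$t$ log-Sobolev inequality to a lower bound on $\mathcal{W}$ (equivalently $\mathcal{W}^\ast$) at the initial time via the monotonicity \eqref{eq.Wmono}, and then establish that bound from the Sobolev embedding on $(M,g(0))$. The only cosmetic difference is that the paper packages the backward-in-time argument through the monotonicity of the infimum $\mu^\ast(g(t),\tau)$ rather than by explicitly evolving a given $f$ along the adjoint heat equation as you do, and it spells out the Jensen/Sobolev computation at $t=0$ in full rather than citing \cite{Z07,Z10}.
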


To make this article more self-contained, we give a proof of this proposition which follows \cite{Y07,FZ16} quite closely. We first note that for $f$ as in the discussion above, satisfying in particular \eqref{eq.normalisationf}, the function $u := (4\pi \tau)^{-n/4}e^{-f/2}$ satisfies the normalisation $\int_M u^2\, dV_{g(t)} = 1$. We then set
\begin{align*}
\mathcal{W}^\ast (g,u,\tau) &= \mathcal{W}(g,f,\tau) + \frac{n}{2}\ln(\tau) + \frac{n}{2}\ln(4\pi) + n\\
&= \int_M \Big[\tau (4\abs{\nabla u}^2_{g(t)} + S_{g(t)}u^2) - u^2 \ln(u^2)\Big]dV_{g(t)}
\end{align*}
as well as 
\begin{equation}\label{mod_gen_mu}
\mu^\ast(g,\tau) := \inf \limits_{u} \mathcal{W}^\ast(g,u,\tau),
\end{equation}	
where the infimum is taken over all $u$ satisfying the above normalisation. From the monotonicity \eqref{eq.Wmono} we obtain
for $\tau(t):=t^\ast + \sigma-t$ with $\sigma>0$ and $t\in[0,T)$
\begin{equation*}
\frac{d}{dt} \mathcal{W}^\ast (g,u,\tau) \geq \frac{n}{2} \frac{d}{dt} \ln(\tau),
\end{equation*}
and therefore
\begin{equation*}
\mu^\ast( g(t_1), \tau(t_1) ) \leq \mu^\ast( g(t_2), \tau(t_2) ) + \frac{n}{2}\ln\frac{\tau(t_1)}{\tau(t_2)}.
\end{equation*}
Setting $t_1=0$ and $t_2=t^\ast$, we find
\begin{equation}\label{eq.mumonotonicity}
\mu^\ast(g(0), t^\ast + \sigma) \leq \mu^\ast(g(t^\ast), \sigma) + \frac{n}{2}\ln \frac{t^\ast + \sigma}{\sigma}, \quad \forall t^\ast \in [0,T).
\end{equation}
Endowed with these preliminaries, we can now start the proof of the proposition.

\begin{proof}[Proof of Proposition \ref{prop.sobS}]
Let $C_S$ denote the $L^2$ Sobolev constant of $(M,g(0))$, i.e.
\begin{equation*}
C_S= C_S(M,g(0)) := \sup \limits_{u \in W^{1,2}(M)} \Big\{ \norm{u}_{2^\ast}- \frac{\norm{u}_2}{\Vol_{g(0)}(M)^{1/n}}\; ; \; \norm{\nabla u}_2 = 1 \Big\},
\end{equation*}
where $2^\ast = \frac{2n}{n-2}$ is the Sobolev conjugate of $2$. Using Jensen's inequality for concave functions with respect to the measure $v^2 dV$ and the assumption $\norm{v}_2 = 1$, we find
\begin{equation*}
\ln\left(\int_M v^{2^\ast} dV_{g(0)}\right) = \ln\left(\int_Mv^{2^\ast-2}  \, v^2 dV_{g(0)}\right) \geq \int_M  \ln\left(v^{2^\ast-2}\right) v^2dV_{g(0)}
\end{equation*}
and therefore, for $\beta>0$,
\begin{align*}
\int_M v^2 \ln\big(v^2\big)dV_{g(0)} &\leq \frac{2^\ast}{2^\ast-2} \ln\Big(\norm{v}_{2^\ast}^{2}\Big)\\
&\leq \frac{n}{2} \ln\Big(\big(C_S\norm{\nabla v}_2 + \Vol_{g(0)}(M)^{-1/n}\big)^2\Big)\\
&\leq \frac{n}{2}\ln(2) + \frac{n}{2}\ln\Big(C^2_S\norm{\nabla v}^2_2 + \Vol_{g(0)}(M)^{-2/n}\Big)\\
&\leq \frac{n}{2}\ln(2) + \frac{n}{2}\Big(\beta C^2_S\norm{\nabla v}^2_2 + \beta \Vol_{g(0)}(M)^{-2/n} - 1 - \ln(\beta)\Big)\\
& \leq \frac{n \beta C^2_S}{2}\int_M \Big(\abs{\nabla v}^2 + \frac{S_{g(0)}}{4}v^2\Big)dV_{g(0)}\\
&\quad- \frac{n}{2}(\ln(\beta) - \ln(2) + 1) + \frac{n \beta}{2} \Big(\Vol_{g(0)}(M)^{-2/n} - \frac{\min_M S_{g(0)}}{4}C^2_S\Big).
\end{align*}
In the fourth step we used that $\ln(x+y) \leq \beta x + \beta y - 1 - \ln(\beta)$ for all $x \geq 0$, $\beta > 0$, and $y > -x$ (see e.g. Lemma 3.2 in \cite{Y07}), and in the last step we used $S-\min_M S\geq 0$. We now pick
\begin{equation*}
\beta := \frac{8(t+\sigma)}{n C_S^2}
\end{equation*}
to obtain
\begin{align*}
\int_M v^2 \ln\big(v^2\big)dV_{g(0)} &\leq (t+\sigma)\int_M \Big(4\abs{\nabla v}_{g(0)}^2 + S_{g(0)}v^2\Big)dV_{g(0)} \\
&\quad - \frac{n}{2}\ln(t+\sigma) + \frac{n}{2}(2\ln(C_S)+\ln(n) - 2\ln(2) -1)\\
&\quad + (t+\sigma) \Big(4C_S^{-2}\Vol_{g(0)}(M)^{-2/n} - \min_M S_{g(0)}\Big)\\
&= (t+\sigma)\int_M \Big(4\abs{\nabla v}_{g(0)}^2 + S_{g(0)}v^2\Big)dV_{g(0)} \\
&\quad - \frac{n}{2}\ln(t+\sigma) + A + B(t+\sigma) - n\ln(2),
\end{align*}
where we set $A= \frac{n}{2}(2\ln(C_S)+\ln(n) -1)$ and $B= 4C_S^{-2}\Vol_{g(0)}(M)^{-2/n} - \min_M S_{g(0)}$. This formula is equivalent to
\begin{equation*}
\mu^\ast(g(0), t + \sigma) \geq \frac{n}{2}\ln(t+\sigma) - A - B(t+\sigma) +n\ln(2)
\end{equation*}
and so by the monotonicity formula \eqref{eq.mumonotonicity}
\begin{equation*}
\mu^\ast(g(t), \sigma) \geq \frac{n}{2}\ln(\sigma) - A - B(t+\sigma) +n\ln(2)
\end{equation*}
or equivalently
\begin{equation*}
\mu^\ast(g(t), \tfrac{\eps}{4}) \geq \frac{n}{2}\ln(\eps) - A - B(t+\tfrac{\eps}{4}) = -\gamma(\eps,t).
\end{equation*}
The last formula is obviously equivalent to the claim in the proposition.
\end{proof}

With Proposition \ref{prop.sobS} in hand, it is easy to prove Theorem \ref{thm.mainS}.

\begin{proof}[Proof of Theorem \ref{thm.mainS}]
We can follow the proof of Theorem \ref{main.theorem} given in Section \ref{sec.Ricci} verbatim, simply replacing the scalar curvature $R_{g(t)}$ with the tensor $S_{g(t)}$ and using Proposition \ref{prop.sobS} instead of Proposition \ref{prop.logsob}. In order to do so, we need 
\begin{equation}
\dt dV_{g(t)}=-S_{g(t)}dV_{g(t)},
\end{equation} 
which follows from the general variation formula for the volume element (see e.g. Proposition 1.5 in \cite{M06}) as well as the fact that the minimum of $S_{g(t)}$ is non-decreasing along a compact flow satisfying \eqref{eq.D} -- and hence $S_{g(t)}^{-}$ is non-increasing. This latter fact follows by taking $X=0$ in \eqref{eq.D}, which yields
\begin{equation*}
\dt S-\Lap S -2\abs{S_{ij}}^2 \geq 0,
\end{equation*}
and a simple maximum principle argument. Once the bounds from Theorem \ref{main.theorem} are proven, the bounds from Corollary \ref{cor.main} follow immediately as in Section \ref{sec.Ricci}.
\end{proof}

\makeatletter
\def\@listi{%
  \itemsep=0pt
  \parsep=1pt
  \topsep=1pt}
\makeatother
{\fontsize{10}{11}\selectfont

}

\printaddress


\begin{thebibliography}{99}

\bibitem{B12}
M.~B\u{a}ile\c{s}teanu.
Bounds on the heat kernel under the Ricci flow.
{\em Proc. Amer. Math. Soc.}, \textbf{140}(2): 691--700, 2012.

\bibitem{BZta}
R.~Bamler and Q.S.~Zhang. 
Heat kernel and curvature bounds in Ricci flows with bounded scalar curvature.
to appear in {\em Adv. Math.} (ArXiv:1501.01291). 

\bibitem{CH09}
X.~Cao and R.S.~Hamilton. 
Differential Harnack estimates for time-dependent heat equations with potentials. 
{\em Geom. Funct. Anal.}, \textbf{19}(4): 989--1000, 2009.

\bibitem{CZ10}
X.~Cao and Q.S.~Zhang.
The conjugate heat equation and ancient solutions of the Ricci flow.
{\em Adv. Math.}, \textbf{228}(5): 2891--2919, 2011.

\bibitem{CKS87}
E.A.~Carlen, S.~Kusuoka and D.W.~Stroock.
Upper bounds for symmetric Markov transition functions.
{\em Ann. Inst. H. Poincar\'{e} Probab. Statist.}, \textbf{23}(2): 245--287, 1987. 

\bibitem{C94}
G.~Carron.
In\'{e}galit\'{e}s isop\'{e}rim\'{e}triques de Faber-Krahn et cons\'{e}quences.
{\em Actes de la table ronde de g\'{e}om\'{e}trie diff\'{e}rentielle}, 205--232,
SMF S\'{e}min. Congr. 1, 1994.

\bibitem{CTY11}
A.~Chau, L.-F.~Tam and C.~Yu.
Pseudolocality for the Ricci flow and applications.
{\em Canad. J. Math.}, \textbf{63}(1): 55--85, 2011.

\bibitem{CLY81}
S.Y.~Cheng, P.~Li and S.-T.~Yau.
On the upper estimate of the heat kernel of a complete Riemannian manifold.
{\em Amer. J. Math.}, \textbf{103}(5): 1021--1063, 1981.

\bibitem{RF3}
B.~Chow et al.
The Ricci flow: techniques and applications. Part III.
Geometric-analytic aspects.
{\em Mathematical Surveys and Monographs}, \textbf{163}, AMS, 2010.

\bibitem{CS16}
T.C.~Collins and G.~Sz\'ekelyhidi.
The twisted K\"ahler-Ricci flow.
{\em J. Reine Ang. Math. (Crelle)} \textbf{716}: 79--205, 2016.

\bibitem{D87}
E.B.~Davies.
Explicit constants for Gaussian upper bounds on heat kernels.
{\em Amer. J. Math.}, \textbf{109}: 319--334, 1987.

\bibitem{D89}
E.B.~Davies.
Heat kernels and spectral theory.
{\em Cambridge University Press}, 1989.

\bibitem{DP89}
E.B.~Davies and M.M.H.~Pang.
Sharp heat kernel bounds for some Laplace operators.
{\em Quart. J. Math. Oxford Ser.}, \textbf{40}(159): 281--290, 1989.

\bibitem{DS84}
E.B.~Davies and B.~Simon.
Ultracontractivity and the heat kernel for Schr\"{o}dinger operators
and Dirichlet Laplacians. 
{\em J. Funct. Anal.}, \textbf{59}(2): 335--395, 1984.

\bibitem{FZ16}
S.~Fang and T.~Zheng.
The (logarithmic) Sobolev inequalities along geometric flow and applications.
{\em J. Math. Anal. Appl.} \textbf{434}: 729--764, 2016.

\bibitem{G94}
A.~Grigor'yan.
Heat kernel upper bounds on a complete non-compact manifold.
{\em Rev. Mat. Iberoamericana}, \textbf{10}(2): 395--452, 1994.

\bibitem{G97}
A.~Grigor'yan. 
Gaussian upper bounds for the heat kernel on arbitrary manifolds. 
{\em J. Diff. Geom.}, \textbf{45}(1): 33--52, 1997.

\bibitem{G75}
L.~Gross.
Logarithmic Sobolev inequalities.
{\em Amer. J. Math.}, \textbf{97}(4): 1061--1083, 1975.

\bibitem{G02}
C.M.~Guenther. 
The fundamental solution on manifolds with time-dependent metrics. 
{\em J. Geom. Anal.}, \textbf{12}(3): 425--436, 2002.

\bibitem{GPT13}
H.~Guo, R.~Philipowski and A.~Thalmaier.
Entropy and lowest eigenvalue on evolving manifolds. 
{\em Pacific J. Math.} \textbf{264}: 61--81, 2013.

\bibitem{HN14}
H.-J.~Hein and A.~Naber. 
New logarithmic Sobolev inequalities and an $\epsilon$-regularity theorem for the Ricci flow.
{\em Commun. Pure Appl. Math.} \textbf{67}(9): 1543--1561, 2014.

\bibitem{H99}
M.~Holder.
Geometrische Evolutionsgleichungen in Kosmologischen Raumzeiten. 
{\em PhD thesis, University of T\"ubingen}, 1999.

\bibitem{H10}
H.~Huang.
Optimal  transportation  and  monotonic  quantities  on evolving  manifolds. 
{\em Pacific J. Math.} \textbf{248}: 305--316, 2010.

\bibitem{LY86}
P.~Li and S.-T.~Yau.
On the parabolic kernel of the Schr\"{o}dinger operator.
{\em Acta Math.}, \textbf{156}(3--4): 153--201, 1986.

\bibitem{L08}
B.~List.
Evolution of an extended Ricci flow system.
{\em Comm. Anal. Geom.} \textbf{16}(5): 1007--1048, 2008.

\bibitem{MM15}
C.~Mantegazza and R.~M\"{u}ller.
Perelman's entropy functional at Type I singularities of the Ricci flow.
{\em J. Reine Ang. Math. (Crelle)} \textbf{703}: 173--199, 2015.

\bibitem{M06}
R.~M\"uller.
Differential Harnack inequalities and the Ricci flow.
{\em EMS Series of Lectures in Mathematics}, 2006.

\bibitem{M10}
R.~M\"uller.
Monotone volume formulas for geometric flows.
{\em J. Reine Ang. Math. (Crelle)} \textbf{643}: 39--57, 2010.

\bibitem{M12}
R.~M\"uller.
Ricci flow coupled with harmonic map flow.
{\em Ann. Sci. Ec. Norm. Sup.} \textbf{45}(1): 101--142, 2012.

\bibitem{P02}
G.~Perelman.
The entropy formula for the Ricci flow and its geometric applications.
{\em ArXiv:math.DG/0211159v1}.

\bibitem{U80}
V.I.~Ushakov. 
Stabilization of solutions of the third mixed problem for a second
order parabolic equation in a non-cylindrical domain (in Russian).
{\em Matem. Sbornik}, \textbf{111}: 95--115, 1980. 
English translation in {\em Math. USSR Sb.}, \textbf{39}: 87--105, 1981.

\bibitem{V85}
N.T.~Varopoulos.
Hardy-Littlewood theory for semigroups.
{\em J. Funct. Anal.}, \textbf{63}(2): 240--260, 1985.

\bibitem{wu}
J.-Y.~Wu.
Sharp Gaussian upper bounds for Schr\"odinger heat kernel on gradient shrinking Ricci solitons.
{\em arXiv:2006.13475}, 2020.

\bibitem{Y07}
R.~Ye. 
The logarithmic Sobolev inequality along the Ricci flow.
{\em ArXiv:0707.2424v4}.

\bibitem{Z06}
Q.S.~Zhang. 
Some gradient estimates for the heat equation on domains and for an equation by Perelman. 
{\em Int. Math. Res. Not.}, \textbf{2007}, Art. ID 092314, 2006.

\bibitem{Z07}
Q.S.~Zhang. 
A uniform Sobolev inequality under Ricci flow. 
{\em Int. Math. Res. Not.} \textbf{2007}, Art. ID rnm056, 2007. 
Erratum to: ``A uniform Sobolev inequality under Ricci flow''. 
{\em Int. Math. Res. Not.} \textbf{2007}, Art. ID rnm096, 2007.

\bibitem{Z10}
Q.S.~Zhang. 
Sobolev inequalities, heat kernels under Ricci flow and the Poincar\'{e} conjecture. 
{\em CRC Press, Boca Raton, FL}, 2010.

\bibitem{Z12}
Q.S.~Zhang. 
Bounds on volume growth of geodesic balls under Ricci flow.
{\em Math. Res. Lett.} \textbf{19}(1): 245--253, 2012.

\bibitem{zhu}
M.~Zhu.
Davies type estimate and the heat kernel bound under the Ricci flow.
{\em Trans. Amer. Math. Soc.} \textbf{368}: 1663--1680, 2016.

\end{thebibliography}
\end{document}